\theoremstyle{plain}
\newtheorem{thm}{Theorem}[section]
\newtheorem{lemma}{Lemma}[section]
\newtheorem{corollary}{Corollary}[section]
\theoremstyle{definition}
\newtheorem{defn}{Definition}[section]
\newtheorem*{defn*}{Definition}
\theoremstyle{remark}
\newtheorem*{remark}{Remark}
\newtheorem*{example}{Example}
\newcommand{\Aut}{\mathrm{Aut\ }}
\newcommand{\Homeo}{\mathrm{Homeo\ }}
\newcommand{\Out}{\mathrm{Out\ }}
\newcommand{\im}{\mathrm{Im}}
\newcommand{\Sub}{\mathrm{L}}
\newcommand{\Co}{\mathrm{C}}
\newcommand{\B}{\mathrm{\bf B}}
\newcommand{\E}{\mathrm{\bf E}}
\begin{document}

\title{Homotopy classification of finite group actions on aspherical spaces}
\author{Lokutsievskiy L.}

\maketitle

\begin{abstract}
    The author proposes a method for investigating actions of finite groups on aspherical spaces. Complete homotopy classification of free actions of finite groups on aspherical spaces is obtained. Also there are some results about non-free actions. For example a relation between the cohomology of finite groups and the lattice structure of its subgroups is obtained by the proposed method. This relation is formulated in terms of spectral sequences.
\end{abstract}

\section{Introduction}

Suppose $G$ is a finite group, $T$ is a Hausdorff topological space, and $p:G\to\Homeo T$ is a free action of $G$ on $T$. Therefore the natural projection $P:T\to T/p(G)$ is a regular covering. Now suppose that $T$ is an Eilenberg-MacLane space of type $K(\pi,1)$; then the long exact sequence of the covering has the form

\begin{equation}
\label{eq_short_seq_pi_G}
    1 \to \pi \to \pi_1\bigl(T/p(G)\bigr) \to G \to 1.
\end{equation}

Thus we have an extension of $G$ by $\pi$. The aim of this paper is to prove that the set of all free actions (up to homotopy conjugation) of a finite group $G$ on aspherical spaces of type $K(\pi,1)$ is in one-to-one correspondence with the set of all extensions of the group $G$ by the group $\pi$ \footnote{This statement is proved under minimal natural restriction: existence of cell complex structure is used}. Classification of such extensions is well known  (see \cite{Brown}) and closely connected to the cohomology of the group $G$.

Also some results about non-free actions are obtained in this paper. For example if $G$ acts (not necessarily freely) on $T$ then the cohomology of the group $G$ and the space $T$ are related by a Hochschield-Mostov spectral sequence. In particular a relation between cohomology of a finite group and the subgroup lattice structure is detected.

\section{Free actions on aspherical spaces}

Let $T$ be an Eilenberg-MacLane space of type $K(\pi,1)$, i.e., $\pi_k(T)=0$ for all $k\neq 1$ and the groups $\pi_1(T)$ and $\pi$ are isomorphic. Sometimes we will use the term ``aspherical space'' instead of ``Eilenberg-MacLane space of type $K(\pi,1)$''.

\begin{remark}
    We will work only with Hausdorff spaces. Throughout what follows the term ``space'' will mean ``Hausdorff topological space''.
\end{remark}

Also let $G$ be a finite group.

\begin{defn}
    Let $p_1$ and $p_2$ be actions of finite groups $G_1$ and $G_2$ on spaces $T_1$ and $T_2$. The action $p_1$ is called homotopy conjugate to $p_2$ ($p_1\sim p_2$) if there exist a homotopy equivalence $\varphi:G_1\to G_2$ and an isomorphism $\theta:G_1\to G_2$ such that

    \begin{equation}
    \label{eq_action_equiv}
        \varphi\circ p_1(g) = p_2\bigl(\theta(g)\bigr)\circ\varphi\mbox{ for all }g\in G_1.
    \end{equation}
\end{defn}


\begin{remark}
    Notice that the classification problem of finite group actions (up to homotopy conjugation) is not well defined in general. It is clear that the relation $\sim$ is reflexive and transitive but it is not symmetric. For example suppose $\mathbb Z_2$ acts on infinite dimensional sphere $S^\infty$ by two ways: trivially $p_1(\pm 1)x=x$ and freely $p_2(\pm 1)x=\pm x$; then $p_2\sim p_1$ by homotopy equivalence $\varphi:S^\infty\to S^\infty$ such that $\varphi:S^\infty\mapsto x_0\in S^\infty$ but there is no map $\varphi':S^\infty\to S^\infty$ such that $\varphi'\circ p_1(\pm 1) = p_2(\pm 1)\circ\varphi'$. However this section is devoted to free actions on aspherical spaces and we will prove that the relation $\sim$ is symmetric in this particular case.
\end{remark}

We will need the following minimal natural restriction for the consequent proofs:

\begin{defn}
    A free action $p$ of a finite group $G$ on a space $T$ is called regular if the quotient space $T/p(G)$ is a cell complex. Note that if $p$ is a regular free action on a space $T$ then $T$ is also a cell complex.
\end{defn}

For example if $G$ acts on an polyhedron \footnote{A polyhedron is a space homeomorphic to the geometric realization of an simplicial complex with weak topology.} $T$ freely and simplicially then $p$ is regular. Of course there are many other examples of regular free actions.

Our aim is to show that the set of all regular free actions on an aspherical space (up to homotopy conjugation) is in one-to-one correspondence with the set of all classes of equivalent extensions of the form (\ref{eq_action_equiv}).

\begin{defn}

    Extensions $1\to\pi_i\to S_i\to G_i\to 1$, $i=1,2$ are called equivalent if they are isomorphic in category of short exact sequences, i.e., there exist isomorphisms $\rho:\pi_1\to\pi_2$, $\xi:S_1\to S_2$, and $\theta:G_1\to G_2$ such that the following diagram is commutative

    \begin{center}
    \begin{tikzpicture}[description/.style={fill=white,inner sep=2pt}]
        \matrix (m) [matrix of math nodes, row sep=1em, column sep=2em, text height=1.5ex, text depth=0.25ex]
        {   1 & \pi_1 & S_1  & G_1 & 1\\
            1 & \pi_2 & S_2  & G_2 & 1 \\ };
        \path[->,font=\scriptsize]
        (m-1-1) edge (m-1-2)
        (m-1-2) edge (m-1-3)
        (m-1-3) edge (m-1-4)
        (m-1-4) edge (m-1-5)
        (m-2-1) edge (m-2-2)
        (m-2-2) edge (m-2-3)
        (m-2-3) edge (m-2-4)
        (m-2-4) edge (m-2-5)
        (m-1-2) edge node[auto] {$ \rho $}  (m-2-2)
        (m-1-3) edge node[auto] {$ \xi $}   (m-2-3)
        (m-1-4) edge node[auto] {$ \theta $}    (m-2-4);
    \end{tikzpicture}
    \end{center}
\end{defn}

\begin{lemma}[well-definiteness]
\label{lm_correctness}
    Let $p_1$ and $p_2$ be free actions of finite groups $G_1$ and $G_2$ on path-connected (not necessarily aspherical) spaces $T_1$ and $T_2$. Suppose there exist a homotopy equivalence $\varphi:T_1\to T_2$ and an isomorphism $\theta:G_1\to G_2$ such that the equation (\ref{eq_action_equiv}) are satisfied; then the short exact sequences

    $$
        1\to\pi_1(T_i)\to\pi_1\bigl(T_i/p_i(G_i)\bigr)\to G_i\to 1,\ i=1,2
    $$

    are equivalent (these sequences are induced by the coverings $T_i\to T_i/p_i(G_i)$).
\end{lemma}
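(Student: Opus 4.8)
The plan is to exploit the equivariance of $\varphi$ to obtain a map on the quotients, transport everything to the level of fundamental groups, and then close the argument with the short five lemma rather than by trying to build a homotopy inverse on the quotients. Fix a base point $\tilde b_1\in T_1$, set $\tilde b_2=\varphi(\tilde b_1)$, and write $b_i=P_i(\tilde b_i)$, where $P_i:T_i\to T_i/p_i(G_i)$ is the covering projection. Since each $T_i$ is path-connected, so is its continuous image $T_i/p_i(G_i)$; all fundamental groups below are taken at these base points. Each $P_i$ fits into the stated short exact sequence of a regular covering, in which the map to $G_i$ is the monodromy homomorphism $m_i:\pi_1\bigl(T_i/p_i(G_i)\bigr)\to G_i$ sending a loop to the unique deck transformation carrying $\tilde b_i$ to the endpoint of the lift of that loop.

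First I would check that $\varphi$ descends to the quotients. Relation (\ref{eq_action_equiv}) reads $\varphi(p_1(g)x)=p_2(\theta(g))\varphi(x)$, so $\varphi$ sends each $p_1(G_1)$-orbit into a single $p_2(G_2)$-orbit; hence $[x]\mapsto[\varphi(x)]$ is a well-defined map $\bar\varphi:T_1/p_1(G_1)\to T_2/p_2(G_2)$ with $P_2\circ\varphi=\bar\varphi\circ P_1$. Continuity of $\bar\varphi$ is immediate because $P_1$ is an open quotient map and $P_2\circ\varphi$ is continuous. I then take as candidate vertical arrows $\rho=\varphi_*$, $\xi=\bar\varphi_*$, together with the given $\theta$, and check that they form a commutative ladder between the two short exact sequences.

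The left square, $(P_2)_*\circ\rho=\xi\circ(P_1)_*$, is just the functor $\pi_1$ applied to $P_2\circ\varphi=\bar\varphi\circ P_1$. The right square, $m_2\circ\xi=\theta\circ m_1$, is the crux of the argument, and I expect it to be the main obstacle, since it is the only step that is not purely formal. Given a loop $\gamma$ in $T_1/p_1(G_1)$ with monodromy $g=m_1([\gamma])\in G_1$, lift it to a path $\tilde\gamma$ in $T_1$ from $\tilde b_1$ to $p_1(g)\tilde b_1$. Then $\varphi\circ\tilde\gamma$ is a lift of $\bar\varphi\circ\gamma$, because $P_2\circ(\varphi\circ\tilde\gamma)=\bar\varphi\circ\gamma$, and it runs from $\tilde b_2$ to $\varphi(p_1(g)\tilde b_1)=p_2(\theta(g))\tilde b_2$, where the last equality is again (\ref{eq_action_equiv}). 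Hence the monodromy of $\bar\varphi\circ\gamma$ is exactly $\theta(g)$, i.e. $m_2(\xi([\gamma]))=\theta(m_1([\gamma]))$, which is the required commutativity.

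Finally, $\rho=\varphi_*$ is an isomorphism because $\varphi$ is a homotopy equivalence, and $\theta$ is an isomorphism by hypothesis. Applying the short five lemma to the commutative ladder of short exact sequences then forces $\xi$ to be an isomorphism as well, so the triple $(\rho,\xi,\theta)$ realizes the two extensions as equivalent, which is precisely the assertion. I note that this route deliberately avoids showing directly that $\bar\varphi$ is a homotopy equivalence, which is not obvious, since a homotopy inverse of $\varphi$ need not be equivariant and so need not descend to the quotients.
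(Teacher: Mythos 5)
Your proof is correct and follows essentially the same route as the paper's: descend $\varphi$ to a continuous map $\bar\varphi$ on the quotients using equivariance and the fact that $P_1$ is an open quotient map, form the commutative ladder of short exact sequences with vertical arrows $\varphi_\ast$, $\bar\varphi_\ast$, $\theta$, and conclude that the middle arrow is an isomorphism by the five lemma. The only difference is one of detail: you verify the right-hand (monodromy) square explicitly by a lifting argument, whereas the paper simply asserts the ladder commutes ``by construction''.
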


Throughout what follows by $f_\#$ denote the induced homomorphism $f_\#:\pi_k(T_1)\to \pi_k(T_2)$ of homotopy groups where $f:T_1\to T_2$ is a continuous map.

\begin{proof}
    Let us mark a point $x_i^0$ in the space $T_i$ and $y_i^0=P_i(x_i^0)$ in the space $T_i/p_i(G_i)$ where $P_i:T_i\to T_i/p_i(G_i)$ is the natural projection, $i=1,2$. Assume that $x_2^0=\varphi(x_1^0)$.

    Let $\psi:T_1/p_1(G_1)\to T_2/p_2(G_2)$ be the map induced by the map $\varphi$. Since projection $P_1$ is open, we see that the map $\psi$ is continuous. By construction we see that the following diagram is commutative

        \begin{center}
    \begin{tikzpicture}[description/.style={fill=white,inner sep=2pt}]
        \matrix (m) [matrix of math nodes, row sep=1em, column sep=2em, text height=1.5ex, text depth=0.25ex]
        {   1 & \pi_1(T_1,x_1^0) & \pi_1\bigl(T_1/p_1(G_1),y_1^0\bigr)  & G_1 & 1\\
            1 & \pi_1(T_2,x_2^0) & \pi_1\bigl(T_2/p_2(G_2),y_2^0\bigr)  & G_2 & 1 \\ };
        \path[->,font=\scriptsize]
        (m-1-1) edge (m-1-2)
        (m-1-2) edge (m-1-3)
        (m-1-3) edge (m-1-4)
        (m-1-4) edge (m-1-5)
        (m-2-1) edge (m-2-2)
        (m-2-2) edge (m-2-3)
        (m-2-3) edge (m-2-4)
        (m-2-4) edge (m-2-5)
        (m-1-2) edge node[auto] {$ \varphi_\# $}(m-2-2)
        (m-1-3) edge node[auto] {$ \psi_\# $}   (m-2-3)
        (m-1-4) edge node[auto] {$ \theta $}    (m-2-4);
    \end{tikzpicture}
    \end{center}

    It is clear that $\varphi_\#$ is an isomorphism. Using 5-lemma for the diagram we see that the homomorphism $\psi_\#$ is an isomorphism too.
\end{proof}

\begin{corollary}
    If the free action $p_1$ is homotopy conjugate to $p_2$ then the quotient spaces $T_1/p_1(G_1)$ and $T_2/p_2(G_2)$ are weakly homotopy equivalent. Moreover, if the spaces $T_1/p_1(G_1)$ and $T_2/p_2(G_2)$ are cell complexes then from the Whitehead theorem if follows that these spaces are homotopy equivalent.
\end{corollary}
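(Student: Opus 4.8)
The plan is to use the same induced map $\psi:T_1/p_1(G_1)\to T_2/p_2(G_2)$ constructed in the proof of Lemma \ref{lm_correctness} and to show that it is a weak homotopy equivalence; the ``moreover'' clause is then immediate from Whitehead's theorem. First I would record that $\psi$ respects the chosen basepoints, since $\psi(y_1^0)=\psi\bigl(P_1(x_1^0)\bigr)=P_2\bigl(\varphi(x_1^0)\bigr)=P_2(x_2^0)=y_2^0$, and that both quotients are path-connected, being continuous surjective images of the path-connected spaces $T_i$. Hence, to prove that $\psi$ is a weak homotopy equivalence it suffices to check that $\psi_\#:\pi_k\bigl(T_1/p_1(G_1)\bigr)\to\pi_k\bigl(T_2/p_2(G_2)\bigr)$ is an isomorphism for every $k\geq 1$.

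For $k=1$ this is exactly the conclusion of Lemma \ref{lm_correctness}, so nothing new is needed there. For $k\geq 2$ the key observation is that each projection $P_i$ is a covering map, since the action $p_i$ is free; therefore $(P_i)_\#:\pi_k(T_i)\to\pi_k\bigl(T_i/p_i(G_i)\bigr)$ is an isomorphism in this range. The square of spaces relating $\varphi$, $\psi$, $P_1$, and $P_2$ commutes by the very definition of $\psi$, so after applying $\pi_k$ we obtain $\psi_\#\circ(P_1)_\#=(P_2)_\#\circ\varphi_\#$. Because $\varphi$ is a homotopy equivalence, $\varphi_\#$ is an isomorphism, and then $\psi_\#=(P_2)_\#\circ\varphi_\#\circ(P_1)_\#^{-1}$ is an isomorphism as well.

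The only genuinely delicate point is the passage through the higher homotopy groups: Lemma \ref{lm_correctness} controls only $\pi_1$, so I would want to make explicit both that covering projections induce isomorphisms on $\pi_k$ for $k\geq 2$ and that the relevant square still commutes after applying $\pi_k$ with compatible basepoints. Once $\psi_\#$ is known to be an isomorphism in all degrees, $\psi$ is a weak homotopy equivalence, which establishes the first assertion of the corollary. Finally, if in addition both quotient spaces are cell complexes, then Whitehead's theorem upgrades the weak homotopy equivalence $\psi$ to an honest homotopy equivalence, proving the second assertion.
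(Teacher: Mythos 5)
Your proof is correct and is essentially the argument the paper intends: the corollary is stated without proof as an immediate consequence of Lemma \ref{lm_correctness}, and your completion---reusing the induced map $\psi$, citing the lemma for $\pi_1$, and using that the covering projections $P_i$ induce isomorphisms on $\pi_k$ for $k\geq 2$ together with the commutative square $\psi_\#\circ P_{1\#}=P_{2\#}\circ\varphi_\#$---is exactly the standard filling-in of that gap. The final appeal to Whitehead's theorem matches the paper's own phrasing of the ``moreover'' clause.
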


\begin{lemma}[injectiveness]
\label{lm_injection}
    Let $p_1$ and $p_2$ be regular free actions of finite groups $G_1$ and $G_2$ on aspherical spaces $T_1$ and $T_2$. Suppose there exist isomorphisms $\xi:\pi_1\bigl(T_1/p_1(G_1)\bigr)\to \pi_1\bigl(T_2/p_2(G_2)\bigr)$ and $\theta:G_1\to G_2$ such that the following square is commutative (horizontal arrows are the boundary maps)

    \begin{center}
    \begin{tikzpicture}[description/.style={fill=white,inner sep=2pt}]
        \matrix (m) [matrix of math nodes, row sep=1em, column sep=2.5em, text height=1.5ex, text depth=0.25ex]
        {   \pi_1\bigl(T_1/p_1(G_1)\bigr) & G_1 \\
            \pi_1\bigl(T_2/p_2(G_2)\bigr) & G_2 \\ };
        \path[->,font=\scriptsize]
        (m-1-1) edge node[auto] {$ \partial_1 $}    (m-1-2)
        (m-2-1) edge node[auto] {$ \partial_2 $}    (m-2-2)
        (m-1-1) edge node[auto] {$ \xi $}       (m-2-1)
        (m-1-2) edge node[auto] {$ \theta $}        (m-2-2);
    \end{tikzpicture}
    \end{center}

    Then the action $p_1$ is homotopy conjugate to $p_2$ by the isomorphism $\theta$ and a homotopy equivalence $\varphi:T_1\to T_2$ such that $\xi \circ P_{1\#} = P_{2\#} \circ \varphi_\#$.
\end{lemma}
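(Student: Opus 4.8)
The plan is to push everything down to the base spaces, which turn out to be aspherical cell complexes, and then lift back up the coverings. Write $B_i=T_i/p_i(G_i)$ and $\Gamma_i=\pi_1(B_i)$. Since $P_i:T_i\to B_i$ is a covering, it induces isomorphisms on all $\pi_k$ with $k\ge 2$, so $B_i$ is again aspherical, and by regularity it is a cell complex; thus each $B_i$ is a $K(\Gamma_i,1)$ CW complex. Fixing basepoints $x_i^0\in T_i$ and $y_i^0=P_i(x_i^0)$, the standard realization property of Eilenberg-MacLane spaces produces a based map $\psi:(B_1,y_1^0)\to(B_2,y_2^0)$ with $\psi_\#=\xi$, unique up to based homotopy. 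Because $\xi$ is an isomorphism and the higher homotopy groups vanish, $\psi_\#$ is an isomorphism in every degree, so the Whitehead theorem makes $\psi$ a homotopy equivalence.

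Next I would lift $\psi$ through the coverings. Consider $\psi\circ P_1:T_1\to B_2$. The commutativity $\partial_2\circ\xi=\theta\circ\partial_1$, together with the fact that $\theta$ is an isomorphism, forces $\xi(\ker\partial_1)=\ker\partial_2$; identifying $\pi_1(T_i)$ with $\ker\partial_i$ via $P_{i\#}$, this reads $\xi\bigl(P_{1\#}\pi_1(T_1)\bigr)=P_{2\#}\pi_1(T_2)$. Hence $(\psi\circ P_1)_\#\pi_1(T_1)=\xi\bigl(P_{1\#}\pi_1(T_1)\bigr)=P_{2\#}\pi_1(T_2)$, so the lifting criterion for $P_2$ holds with equality, and since $\psi(y_1^0)=y_2^0$ there is a unique lift $\varphi:T_1\to T_2$ with $\varphi(x_1^0)=x_2^0$ and $P_2\circ\varphi=\psi\circ P_1$. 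Reading the last relation on fundamental groups gives $P_{2\#}\circ\varphi_\#=\psi_\#\circ P_{1\#}=\xi\circ P_{1\#}$, which is exactly the asserted compatibility; it also shows $\varphi_\#$ is an isomorphism on $\pi_1$, and as all higher groups vanish, $\varphi$ is a homotopy equivalence by Whitehead.

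It remains to check the equivariance $\varphi\circ p_1(g)=p_2(\theta(g))\circ\varphi$ for every $g\in G_1$, and I expect this to be the main obstacle, since it is here that the compatibility of $\xi$ and $\theta$ with the deck actions must be exploited. The key observation is that both sides are lifts of the same map: as $p_1(g)$ and $p_2(\theta(g))$ are deck transformations, $P_2\circ(\varphi\circ p_1(g))=\psi\circ P_1\circ p_1(g)=\psi\circ P_1$ and likewise $P_2\circ(p_2(\theta(g))\circ\varphi)=P_2\circ\varphi=\psi\circ P_1$. By uniqueness of lifts over the connected space $T_1$, it therefore suffices to verify that the two maps agree at the single point $x_1^0$.

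To compare values at $x_1^0$, I would track a path. Choose a path $\alpha$ in $T_1$ from $x_1^0$ to $p_1(g)x_1^0$; its projection $\gamma=P_1\circ\alpha$ is a loop at $y_1^0$ with $\partial_1([\gamma])=g$, this being the very definition of the boundary map as the monodromy of $P_1$. Then $\varphi\circ\alpha$ runs from $x_2^0$ to $\varphi(p_1(g)x_1^0)$ and projects onto $\psi\circ\gamma$, so it is the lift of the loop $\psi\circ\gamma$ starting at $x_2^0$; its endpoint is thus $p_2\bigl(\partial_2(\xi[\gamma])\bigr)x_2^0$, which the commutative square turns into $p_2\bigl(\theta(\partial_1[\gamma])\bigr)x_2^0=p_2(\theta(g))x_2^0$. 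Hence $\varphi(p_1(g)x_1^0)=p_2(\theta(g))x_2^0=\bigl(p_2(\theta(g))\circ\varphi\bigr)(x_1^0)$, the two lifts coincide, and the equivariance follows. The only delicate point is to fix, once and for all, the convention relating $\partial_i$ to the left deck action $p_i$, so that the monodromy computation yields $\theta(g)$ rather than its inverse; with that convention in place the lemma is proved.
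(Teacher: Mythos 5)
Your proof is correct, and its skeleton is the same as the paper's: realize $\xi$ by a map $\psi$ between the quotient spaces (which are aspherical cell complexes, by regularity), lift through the coverings to obtain $\varphi$, and verify equivariance via the monodromy description of $\partial_1,\partial_2$. The difference is in execution, and it is a genuine streamlining. The paper constructs $\varphi$ by hand: it defines $\varphi(x)$ by lifting the path $\psi\circ P_1\circ\gamma$, then separately proves well-definedness (your kernel identity $\xi(\ker\partial_1)=\ker\partial_2$ appears there as the observation $\psi_\#(P_{1\#}[\gamma])\in\ker\partial_2$) and proves continuity at some length, using local triviality of $P_2$ and local contractibility of cell complexes; this is precisely an inline proof of the lifting criterion you invoke, whose hypotheses you are entitled to, since $T_1$ is a cell complex (hence locally path-connected) because $p_1$ is regular. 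For equivariance the paper again argues pointwise, lifting the concatenation $\gamma_g\oplus\bigl(p_1(g)\circ\gamma\bigr)$ to evaluate $\varphi\bigl(p_1(g)x\bigr)$ for every $x$, whereas you observe that $\varphi\circ p_1(g)$ and $p_2\bigl(\theta(g)\bigr)\circ\varphi$ are both lifts of $\psi\circ P_1$ through $P_2$ and invoke uniqueness of lifts on the connected space $T_1$, so only the value at $x_1^0$ needs checking --- and there your monodromy computation coincides with the paper's (the paper likewise obtains $g'=\theta(g)$ from the commutative square). Your route buys brevity and eliminates the continuity verification, which is the least transparent part of the paper's argument; the paper's route is self-contained, re-deriving the covering-space facts it needs. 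Your closing worry about the convention relating $\partial_i$ to the deck action is immaterial: whichever convention is fixed, it is applied to both coverings simultaneously, and the commutative square intertwines the two monodromies through $\theta$, so no stray inverse can appear.
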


\begin{proof}
    Mark points $x_1^0\in T_1^0$ and $y_1^0=P_1(x_1^0)\in T_1/p_1(G_1)$. By definition, put $y_2^0=\psi(y_1^0)$. Denote by $x_2^0$ any point belonging to the discrete preimage $P_2^{-1}(y_2^0)$.

    Since $\xi$ is isomorphism and CW-complexes $T_i/p_i(G_i)$, $i=1,2$ are aspheric we see that there exists homotopy equivalence $\psi:T_1/p_1(G_1)\to T_2/p_2(G_1)$ such that $\psi(y_1^0)=y_2^0$ and $\psi_\#=\xi$.

    Let us construct the required map $\varphi:T_1\to T_2$: for any point $x\in T_1$ there exists a path $\gamma:[0;1]\to T_1$ joining $x_1^0=\gamma(0)$ to $x=\gamma(1)$. Consider the path $\psi\circ P_1\circ\gamma$ in the space $T_2/p_2(G_2)$ joining $y_2^0$ to $\psi(P_1(x))$. There exists a unique lifting $\gamma'$ of this path to the space $T_2$ such that $\gamma'(0)=x_2^0$. So $P_2 \circ \gamma'  =  \psi \circ P_1 \circ \gamma$. By definition, put $\varphi(x)=\gamma'(1)$.

    First we shall prove that the map $\varphi$ is well defined. Obviously, it is sufficient to prove that if $\gamma$ is a loop in $T_1$ ($x=x_1^0$) then $\varphi(x_1^0)=x_2^0$. Let $[\gamma]\in\pi_1(T_1,x_1^0)$ be the homotopy class of the loop $\gamma$; then $P_{1\#}([\gamma])$ belongs to the kernel $\ker\partial_1$. Since the diagram from the condition is commutative, we have $\psi_\#(P_{1\#}([\gamma]))\in\ker\partial_2$. Since covering $T_2\to T_2/p_2(G_2)$ is regular and $P_2\circ\gamma' = \psi\circ P_1\circ\gamma$, we see that $\gamma'$ is a loop in $T_2$ as was to be proved.

    Notice that the following diagram is commutative by construction of $\varphi$:

    \begin{center}
    \begin{tikzpicture}[description/.style={fill=white,inner sep=2pt}]
        \matrix (m) [matrix of math nodes, row sep=1.5em, column sep=2.5em, text height=1.5ex, text depth=0.25ex]
        {   T_1     & T_2       \\
            T_1/p_1(G_1)    & T_2/p_2(G_2)  \\
        };
        \path[->,font=\scriptsize]
            (m-1-1) edge node[auto] {$ \varphi $}   (m-1-2)
            (m-2-1) edge node[auto] {$ \psi $}  (m-2-2)
            (m-1-1) edge node[auto] {$ P_1 $}   (m-2-1)
            (m-1-2) edge node[auto] {$ P_2 $}   (m-2-2);
    \end{tikzpicture}
    \end{center}

    It is important to note that if $\gamma$ is a path in $T_2$ then the path $\varphi\circ\gamma$ in $T_2$ is the same as the lift $\gamma'$ of the path $\psi\circ P_1\circ \gamma$ into the space $T_2$ such that $\gamma'(0)=\varphi(\gamma(0))$.

    \begin{figure}
    \label{fig_phi_continuty}
        \includegraphics[width=13.5cm]{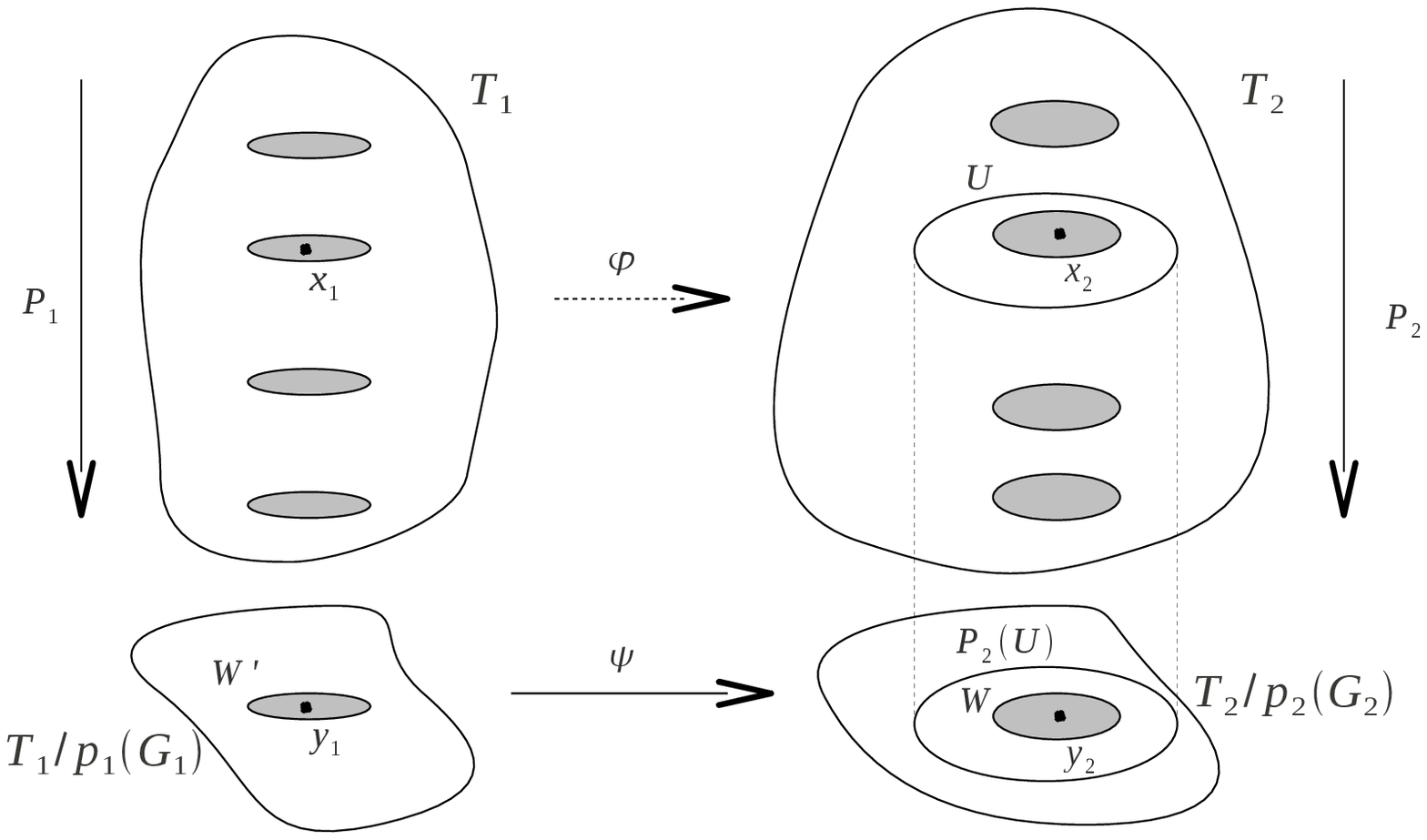}
    \end{figure}

    Secondly we shall show that the constructed map $\varphi$ is continuous. Let $x_1$ be a point in $T_1$, $x_2=\varphi(x_1)\in T_2$, $U$ a neighborhood of the point $x_2$ in $T_2$, and $y_i=P_i(x_i)$ where $i=1,2$. It is sufficient to find a neighborhood of the point $x_1$ such that $\varphi(V)\subseteq U$.

    Since the projection $P_2$ is open, we see that $P_2(U)$ is open too. Hence there exists a neighborhood $W\subseteq P_2(U)$ of the point $y_2$ such that $P_2^{-1}(W)$ is homeomorphic to the direct product $P_2^{-1}(y_2)\times W$. The set $x_2\times W$ is open in $T_2$ because the fibre $P_2^{-1}(y_2)$ is discrete.

    The sets $(x_2\times W)\cap U$ and $P_2\bigl((x_2\times W)\cap U\bigr)$ are open. Whence without loss of generality it can be assumed that $x_2\times W$ is contained in $U$ (we can replace $W$ by $P_2\bigl((x_2\times W)\cap U\bigr)$ if necessary).

    It follows from continuity of $\psi$ that $\psi^{-1}(W)$ is open in $T_1/p_1(G)$. Consequently there exists a neighborhood $W'\subseteq\psi^{-1}(W)$ of the point $y_2$ such that the preimage $P_1^{-1}(W')$ is homeomorphic to $P_1^{-1}(y_1)\times W'$. Since the cell complex $T_1/p_1(G_1)$ is locally contractible (see \cite{Hatcher}), we may assume that the neighborhoods $W'$ and $x_1\times W'$ are path-connected.

    By $P_2\bigl(\varphi(x_1\times W')\bigr) = \psi(W') \subseteq W$ so we see that $\varphi(x_1\times W')$ is locally connected too. Therefore, we see that $\varphi(x_1\times W')$ belongs to the level $x_2\times W\subseteq U$. This means that $\varphi$ is continuous as was to be proved.

    Thirdly let us prove that $\varphi$ is a homotopy equivalence of $T_1$ and $T_2$. From the Whitehead theorem it follows that $\varphi$ is a homotopy equivalence iff $\varphi_\#$ is an isomorphism of the groups $\pi_1(T_1)$ and $T_2$. The following diagram is commutative:

        \begin{center}
    \begin{tikzpicture}[description/.style={fill=white,inner sep=2pt}]
        \matrix (m) [matrix of math nodes, row sep=1.5em, column sep=2.5em, text height=1.5ex, text depth=0.25ex]
        {   1   & \pi_1(T_1)    & \pi_1\bigl(T_1/p_1(G_1)\bigr) & G_1   \\
            1   & \pi_1(T_2)    & \pi_1\bigl(T_2/p_2(G_2)\bigr) & G_2   \\
        };
        \path[->,font=\scriptsize]
            (m-1-1) edge                (m-1-2)
            (m-1-2) edge node[auto] {$P_{1\#}$} (m-1-3)
            (m-1-3) edge                (m-1-4)

            (m-2-1) edge                (m-2-2)
            (m-2-2) edge node[auto] {$P_{2\#}$} (m-2-3)
            (m-2-3) edge                (m-2-4)

            (m-1-2) edge node[auto] {$ \varphi_\# $}    (m-2-2)
            (m-1-3) edge node[auto] {$ \psi_\# $}       (m-2-3)
            (m-1-4) edge node[auto] {$ \theta $}        (m-2-4);
    \end{tikzpicture}
    \end{center}

    Using 5-lemma we see that $\varphi_\#$ is an isomorphism.

    Finally we shall prove that the map $\varphi$ is equivariant\footnote{The term ``equivariant'' is usually used for actions of one group. Here we mean ``equivariant with the respect to the isomorphism $\theta$''.}. For any $g\in G_1$  there exists a path $\gamma_g$ connecting points $x_1^0$ and $p_1(g)x_1^0$. Let $x$ be a point in $T_1$, $\gamma$ be a path connecting $x_1^0$ and $x$ and $\omega=\gamma_g\oplus\bigl(p_1(g)\circ\gamma\bigr)$ be the path connecting $x_1^0$ and $p_1(g)x$ (by definition, $\omega(0)=\gamma_g(0)=x_1^0$, $\omega(\frac{1}{2})=\gamma_g(1)=p_1(g)x_1^0$ and $\omega(1)=p_1(g)\gamma(1)=p_1(g)x$).

    Let us construct an explicit lift of the path $\psi\circ P_1\circ\omega$ into the space $T_2$. Note that since $\psi\circ P_1\circ\gamma_g$ is a loop, we get $P_2\Bigl(\varphi\bigl(p_1(g)x_1^0\bigr)\Bigl) = y_2^0$. Hence there exists an element $g'\in G_2$ such that $\varphi\bigl(p_1(g)x_1^0\bigr) = p_2(g')x_2^0$. We obtain $g'=\theta(g)$ by commutativity of the diagram from the condition.

     Consider the lift $\gamma'_g$ of the path $\psi\circ P_1\circ\gamma_g$ such that $\gamma'_g(0)=x_2^0$. By the above we get $\gamma'_g(1) = p_2\bigl(\theta(g)\bigr)x_2^0$. Thus if $\gamma'$ is the lift of the path $\psi \circ P_1\circ \gamma$ such that $\gamma'(0)=x_2^0$; then the path  $\omega'=\gamma_g'\oplus\bigl(p_2(\theta(g))\circ\gamma'\bigr)$ is the lift of the path $\psi\circ P_1\circ\omega$ such that $\omega'(0)=x_2^0$. Consequently, $\varphi\bigl(p_1(g)x\bigr) = \omega'(1) = p_2\bigl(\theta(g)\bigr)\gamma'(1) = p_2\bigl(\theta(g)\bigr)x$. Q.E.D.
\end{proof}

\begin{remark}
    If the action $p_1$ is homotopy conjugate to $p_2$ then the spaces $T_1$ and $T_2$ are homotopy equivalent.
\end{remark}

\begin{corollary}[symmetry of $\sim$]
    If free actions $p_1$ and $p_2$ are regular then $p_1\sim p_2$ iff $p_2\sim p_1$.
\end{corollary}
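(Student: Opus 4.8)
The plan is to combine the two preceding lemmas: Lemma~\ref{lm_correctness} converts a homotopy conjugation into an equivalence of the associated extensions, while Lemma~\ref{lm_injection} converts (the relevant part of) such an equivalence back into a homotopy conjugation. Equivalence of extensions is governed by \emph{isomorphisms}, hence is automatically symmetric, and this is precisely the feature that bridges the gap. Since the asserted biconditional is itself symmetric under exchanging the indices $1$ and $2$, it suffices to establish the single implication: if $p_1\sim p_2$ then $p_2\sim p_1$. So I would assume $p_1\sim p_2$, i.e. that there exist a homotopy equivalence $\varphi:T_1\to T_2$ and an isomorphism $\theta:G_1\to G_2$ satisfying (\ref{eq_action_equiv}).

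First I would feed this data into Lemma~\ref{lm_correctness}. Its conclusion is that the two short exact sequences induced by the coverings $P_i$ are equivalent; reading off the rightmost square of the commutative ladder produced in its proof, this yields in particular an isomorphism $\xi:=\psi_\#:\pi_1\bigl(T_1/p_1(G_1)\bigr)\to\pi_1\bigl(T_2/p_2(G_2)\bigr)$ for which the boundary-map square commutes, that is
\[
    \theta\circ\partial_1 = \partial_2\circ\xi .
\]
Here I would note that $T_1,T_2$ are aspherical and both actions are regular, so the hypotheses of both lemmas are met.

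Next I would simply invert. Because $\xi$ and $\theta$ are isomorphisms, the inverse data $\xi^{-1}:\pi_1\bigl(T_2/p_2(G_2)\bigr)\to\pi_1\bigl(T_1/p_1(G_1)\bigr)$ and $\theta^{-1}:G_2\to G_1$ satisfy $\partial_1\circ\xi^{-1}=\theta^{-1}\circ\partial_2$, i.e. the same square read in the reverse direction commutes. Applying Lemma~\ref{lm_injection} with the roles of the indices $1$ and $2$ interchanged, taking as input the commuting square supplied by the pair $(\xi^{-1},\theta^{-1})$, produces a homotopy equivalence $\varphi':T_2\to T_1$ together with the isomorphism $\theta^{-1}:G_2\to G_1$ realizing $p_2\sim p_1$. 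This completes the argument.

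I do not expect any genuine obstacle: the entire content resides in the two lemmas just proved, and the only thing that must be checked is that inverting an isomorphism preserves commutativity of the boundary-map square, which is immediate. The single hypothesis to keep tracked is regularity together with asphericity, since these are exactly what make Lemma~\ref{lm_injection} available in the reverse direction. This dependence is also the conceptual reason why symmetry can fail for the general (non-regular, non-aspherical) actions exhibited in the earlier remark: without asphericity one cannot promote the abstract isomorphism $\xi^{-1}$ back to a genuine equivariant homotopy equivalence.
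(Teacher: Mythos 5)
Your proof is correct and follows essentially the same route as the paper: apply the well-definiteness lemma (Lemma~\ref{lm_correctness}) to extract the commuting boundary square, then apply the injectiveness lemma (Lemma~\ref{lm_injection}) to the inverted data $(\xi^{-1},\theta^{-1})$ with the indices swapped. Your additional remarks --- that one implication suffices by symmetry of indices, and that regularity and asphericity are exactly what license the reverse application --- are sound elaborations of the same argument.
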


\begin{proof}
    Suppose, $p_1\sim p_2$. Using lemma \ref{lm_correctness} we see that there exists an isomorphism $\xi:\pi_1\bigl(T_1/p_1(G)\bigr) \to \pi_1\bigl(T_2/p_2(G)\bigr)$ such that the appropriate diagram is commutative. Using the previous lemma to the isomorphisms $\xi^{-1}$ and $\theta^{-1}$ we immediately get $p_2\sim p_1$
\end{proof}

\begin{lemma}[surjectiveness]
    Let $1\to\pi\to S\to G\to 1$ be an extension of a finite group $G$ by a discrete group $\pi$; then there exist an aspheric space $T$ of type $K(\pi,1)$ and a regular action $p$ of $G$ on $T$ such that the following short sequence is equivalent to the first one:

    $$
        1 \to \pi_1(T) \to \pi_1\bigl(T/p(G)\bigr) \to G \to 1
    $$
\end{lemma}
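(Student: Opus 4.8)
The plan is to realize the given extension geometrically through covering space theory applied to a classifying space of the total group $S$. First I would construct a model of the Eilenberg--MacLane space $K(S,1)$ as a CW complex; since $S$ is a discrete group this is standard (for instance the geometric realization of its bar construction $BS$). This space is aspherical with $\pi_1\bigl(K(S,1)\bigr)\cong S$, and it will serve as the quotient $T/p(G)$.

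Next, the map $S\to G$ in the extension has kernel precisely the image of $\pi$, which is therefore a normal subgroup of $S$. I would then pass to the covering $P\colon T\to K(S,1)$ corresponding to the subgroup $\pi\le S\cong\pi_1\bigl(K(S,1)\bigr)$; such a covering exists because a CW complex is locally contractible, hence semi-locally simply connected. Because $\pi$ is normal in $S$, this covering is regular, and its group of deck transformations is canonically isomorphic to $S/\pi\cong G$. Defining $p\colon G\to\Homeo T$ to act by these deck transformations (via the fixed isomorphism $S/\pi\cong G$) yields a free action, and since the quotient $T/p(G)$ is exactly $K(S,1)$, which is a CW complex, the action $p$ is regular in the required sense. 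I would then check that $T$ is of type $K(\pi,1)$: the covering map $P$ induces isomorphisms $\pi_k(T)\xrightarrow{\sim}\pi_k\bigl(K(S,1)\bigr)$ for all $k\ge2$, so these groups vanish, while $\pi_1(T)\cong\pi$ by the defining property of the covering.

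Finally, the long exact sequence of the covering $T\to T/p(G)=K(S,1)$ takes the form $1\to\pi_1(T)\to\pi_1\bigl(K(S,1)\bigr)\to G\to1$, and it remains to see that this sequence is equivalent to the original one. Here the vertical isomorphisms are essentially built into the construction: $\rho$ identifies $\pi_1(T)$ with $\pi$, the map $\xi$ identifies $\pi_1\bigl(K(S,1)\bigr)$ with $S$, and $\theta$ may be taken to be the identity on $G$. The step I expect to require the most care is verifying that, under these identifications, the boundary map of the covering sequence coincides with the quotient homomorphism $S\to G$ of the extension --- that is, that the connecting map assigning to a loop its associated deck transformation really corresponds to the algebraic projection $S\to S/\pi$. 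This is a naturality statement relating the deck-transformation description of the covering's exact sequence to the algebraic quotient; once it is pinned down, the commutativity of the full ladder diagram in the definition of equivalence follows immediately, completing the proof.
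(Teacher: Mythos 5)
Your proof is correct, but it runs in the opposite direction from the paper's. The paper works top-down: it takes the Milnor join $\E S = S * S * S * \cdots$, an explicitly contractible simplicial complex on which the discrete group $S$ acts freely, simplicially and properly discontinuously; it then sets $T = \E S/\pi$, so that $T$ is a $K(\pi,1)$ carrying a simplicial structure, and lets $G = S/\pi$ act on $T$ with quotient $T/G = \E S/S$, again a simplicial complex --- this makes regularity of the action automatic and keeps the covering theory to a minimum (only the fact that a free properly discontinuous action yields a regular covering is used). You instead work bottom-up: you build the base $K(S,1)$ first (bar construction) and ascend to $T$ via the existence and classification of covering spaces, identifying $G$ with the deck group of the regular covering attached to the normal subgroup $\pi\trianglelefteq S$. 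The two constructions produce the same tower $\E S \to T \to K(S,1)$, but the work is distributed differently: the paper's version is self-contained and point-set explicit, whereas yours outsources the technical content to standard citable theorems --- existence of coverings over locally contractible bases, the deck group of a normal covering being $\pi_1/\pi$, and the identification of the boundary map with the algebraic quotient $S\to S/\pi$, which is the step you rightly flag and which is exactly the standard deck-transformation correspondence (Proposition 1.39 in Hatcher), so no gap remains. A small bonus of your route is that the equivalence of the resulting covering sequence with the given algebraic extension is visibly built into the construction; the paper's own proof ends with ``clearly there exists a continuous and free action of $G=S/\pi$ on $T$'' and leaves that final ladder-diagram verification implicit.
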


\begin{proof}
    Let us use the classic the Milnor construction (see \cite{Feigen_Fuks,Husemoller}). Consider the group $S$ as a discrete topological space; then the space $\E S = S * S * S * \ldots$ is a join of countable number of the $S$ copies. The space $\E S$ consists of points

    $$
        \bigl<t,x\bigr> = (t_1 x_1,t_2 x_2, \ldots, t_n x_n,\ldots),
    $$

    where $x_i\in S$ and $t_i\in [0;1]$. Moreover, only finite number of $t_i$ is non-zero. Points $\bigl<t,x\bigr>$ and $\bigl<t,x\bigr>$ are considered identical if $t_i=t'_i$ for all $i$ and $x_i=x'_i$ when $t_i=t'_i > 0$. By definition the topology on $\E S$ is the weakest topology such that the following maps are continuous:

    $$
        \begin{array}{ll}
            T_i: \E G \to [0;1]; \quad& T_i\left(\bigl<t,x\bigr>\right) = t_i;\\
            X_i: T_i^{-1}(0;1] \to S; \quad& X_i\left(\bigl<t,x\bigr>\right) = x_i;\\
        \end{array}
    $$

    It is obvious that the space $\E S$ is an simplicial complex.

    The left action of $S$ on $\E S$ is free, continuous and simplicial. We claim that this action is discrete, i.e. for any point $\bigl<t,x\bigr>$ there exists a neighborhood $U\ni\bigl<t,x\bigr>$ such that the sets $gU$, $g\in G$ are mutually disjoint. Indeed,

    $$
        U=\bigcap_{i:t_i\neq 0} \bigl(T_i^{-1}(0;1]\cap X_i^{-1}(x_i)\bigr).
    $$

    Since $U$ is an intersection of finite number of open sets; then $U$ is open. It is obvious that the sets $gU$ are mutually disjoint.

    Thus the group $\pi$ as an subgroup of $S$ acts freely and discretely on $\E S$. Consequently $\E S\to T=\E S/\pi$ is a regular covering. Obviously $T$ is an Eilenberg-MacLane space of type $K(\pi,1)$. Since action of $\pi$ on $\E S$ is simplicial, we see that $T$ has an simplicial structure (see \cite{Bredon}).

    Clearly there exists a continuous and free action of $G=S/\pi$ on $T$ and $T/G=\E S/S$.
\end{proof}

Finally let us note that we can not weaken the conditions of the previous lemma, i.e., we can not require that for any aspheric space $T$ there exists a necessary free action of the group $G$. For example let $T$ be a wedge product of two circles. Since any homeomorphism leaves fixed the sewing point, we see that for any non-trivial group there is no its free action on $T$.

\begin{thm}[classification]
\label{thm_class_free_action}
    Let $G$ be a finite group and $\pi$ a discrete group. Then the set of all regular free actions (up to homotopy conjugation) of $G$ on Eilenberg-MacLane spaces of type $K(\pi,1)$ is in one-to-one correspondence with the set of all classes of equivalent extensions of $G$ by $\pi$. Here if $p$ is a regular free action on an aspheric space $T$ then the following extension corresponds to the action $p$:

    $$
        1\to \pi_1(T) \to \pi_1\bigl(T/p(G)\bigr) \to G \to 1.
    $$
\end{thm}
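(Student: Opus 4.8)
The plan is to assemble the three preceding lemmas into a single bijection. Define a map $\Phi$ that sends a regular free action $p$ of $G$ on an aspheric space $T$ of type $K(\pi,1)$ to the class of the extension
$$
    1\to \pi_1(T) \to \pi_1\bigl(T/p(G)\bigr) \to G \to 1
$$
arising from the long exact sequence of the covering $P:T\to T/p(G)$. By the corollary on symmetry of $\sim$, the relation $\sim$ is an equivalence relation on regular free actions, so the source of $\Phi$ makes sense as a set of equivalence classes, while the target is the set of equivalence classes of extensions in the sense of the definition above.

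First I would check that $\Phi$ is well defined on homotopy-conjugation classes. Suppose $p_1\sim p_2$; then by definition there are a homotopy equivalence $\varphi$ and an isomorphism $\theta$ satisfying (\ref{eq_action_equiv}). Lemma~\ref{lm_correctness} produces a commutative ladder between the two short exact sequences with vertical isomorphisms $\varphi_\#$, $\psi_\#$, $\theta$; reading these as the maps $\rho,\xi,\theta$ of the definition of equivalent extensions shows that the two extensions are equivalent. Hence $\Phi$ descends to the quotient.

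Next, injectivity. Suppose $\Phi(p_1)=\Phi(p_2)$, i.e. the two induced extensions are equivalent. An isomorphism of the two short exact sequences restricts, on its right-hand square, to isomorphisms $\xi:\pi_1\bigl(T_1/p_1(G)\bigr)\to\pi_1\bigl(T_2/p_2(G)\bigr)$ and $\theta:G\to G$ commuting with the boundary maps $\partial_1,\partial_2$. This is exactly the hypothesis of Lemma~\ref{lm_injection}, which yields $p_1\sim p_2$. Thus $\Phi$ is injective. For surjectivity, given any extension $1\to\pi\to S\to G\to 1$, the surjectiveness lemma constructs an aspheric space $T$ of type $K(\pi,1)$ together with a regular free action $p$ of $G$ whose associated extension is equivalent to the given one; hence that class lies in the image of $\Phi$.

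I expect no genuinely hard step to remain at this level: the delicate construction of the equivariant homotopy equivalence has already been carried out in Lemma~\ref{lm_injection}, and the existence statement in the surjectiveness lemma supplies the image. The only point deserving care is to match the two equivalence relations exactly—verifying that the vertical maps delivered by Lemma~\ref{lm_correctness} constitute precisely an isomorphism of short exact sequences, and conversely that an isomorphism of extensions supplies exactly the boundary-compatible pair $(\xi,\theta)$ demanded by Lemma~\ref{lm_injection}. Once these identifications are spelled out, the bijection follows formally.
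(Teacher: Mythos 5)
Your proposal is correct and follows exactly the route the paper intends: the paper gives no separate proof of the theorem precisely because it is the formal assembly of Lemma~\ref{lm_correctness} (well-definedness of the map to extension classes), Lemma~\ref{lm_injection} (injectivity, using the right-hand square of an equivalence of extensions as the boundary-compatible pair $(\xi,\theta)$), and the surjectiveness lemma, together with the corollary establishing symmetry of $\sim$. Your added care in matching the two equivalence relations is exactly the right glue, and nothing further is needed.
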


Let us consider a simple illustrative example:

\begin{example}
    Let the group $G=\mathbb Z_2$ act on $T=S_1\times S^\infty$. Since $S^\infty$ is contractible, then $T$ is an aspherical space of type $K(\mathbb Z,1)$ . There exists only three different extensions of the group $\mathbb Z_2$ by $\mathbb Z$:

    \begin{enumerate}
        \item The group $\mathbb Z_2$ acts on $S^1$ trivially. So, $p_1(\pm 1)(x,y)=(x,\pm y)$ where $(x,y)\in S^1\times S^\infty$ Then we get $(S^1\times S^\infty)/p_1(\mathbb Z_2) = S^1\times\mathbb RP^\infty$. Thus we have the first extension:

        $$
            1\to\mathbb Z \to \mathbb Z \times \mathbb Z_2 \to \mathbb Z_2 \to 1.
        $$

        \item The group $\mathbb Z_2$ acts on $S^1$ by reflection with respect to the center. So, $p_2(\pm 1)(x,y)=(\pm x, y)$. Since $(S^1\times S^\infty)/p_2(\mathbb Z_2)=S_1\times S^\infty$, then we obtain the second extension

        $$
            1\to\mathbb Z  \xrightarrow{\times 2}  \mathbb Z \to \mathbb Z_2 \to 1.
        $$

        \item The group $\mathbb Z_2$ acts on $S^1\subseteq\mathbb C$ by reflection with respect to a diameter. So, $p_3(\pm 1)(x,y)=(x^{\pm 1}, \pm y)$. Hence we get the last one

        $$
            1\to\mathbb Z \to \mathbb Z \rtimes_{(-1)} \mathbb Z_2 \to \mathbb Z_2\to 1.
        $$

         Here we have to a little bit look ahead. Now we have no effective methods for calculating of this extension. But we will prove lemma \ref{lm_semidirect_product} and the extension can be easily calculated by this lemma.
    \end{enumerate}
\end{example}

\section{Congruent extensions}
Classification of classes of congruent extensions is well known (Eulenberg, MacLane, 1947, see \cite{EilenbergMacLane}).

\begin{defn}
    Equivalent extensions $1\to\pi_i\to S_i\to G_i\to 1$, $i=1,2$ are called congruent if $\pi_1=\pi_2=\pi$, $G_1=G_2=G$ and $\rho=id_\pi$, $\theta=id_G$

        \begin{center}\begin{tikzpicture}
        \matrix (m) [matrix of math nodes, row sep=0.4em, column sep=1.5em, text height=1.5ex, text depth=0.25ex]
        {
                &   & S_1   &   &   \\
            1   & \pi   &   & G & 1 \\
                &   & S_2   &   &   \\
        };
        \path[->,font=\scriptsize]
        (m-2-1) edge (m-2-2)
        (m-2-2) edge (m-1-3)
            edge (m-3-3)
        (m-1-3) edge (m-2-4)
            edge node[auto] {$\xi$} (m-3-3)
        (m-3-3) edge (m-2-4)
        (m-2-4) edge (m-2-5);
    \end{tikzpicture}\end{center}
\end{defn}

We can easy reformulate constructed homotopy classification in theorem \ref{thm_class_free_action} for the congruent extensions language:

\begin{defn}
    An aspheric space $T$ is called an Eilenberg-MacLane space of type $K(\pi,1)$ in strong sense if it is marked by a point $x^0\in T$ and an isomorphism $\rho:\pi_1(T,x^0)\to \pi$ is fixed.
\end{defn}

Since the isomorphism $\rho$ is fixed, we can use $\pi$ instead of $\pi_1(T,x^0)$.

\begin{thm}[classification]
\label{thm_strong_class_free_action}
    Let $G$ be a finite group and $\pi$ a discrete group. Then the set of all regular free actions (up to strong homotopy conjugation) of $G$ on Eilenberg-MacLane spaces of type $K(\pi,1)$ in strong sense is in one-to-one correspondence with the set of all classes of congruent extensions of $G$ by $\pi$. Here if $p$ is a regular free action of $G$ on an aspheric space $T$ of type $K(\pi,1)$ in strong sense then the following extension corresponds to the action $p$:

    $$
        1\to \pi_1(T) \to \pi_1\bigl(T/p(G)\bigr) \to G \to 1.
    $$
\end{thm}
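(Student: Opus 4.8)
The plan is to run the three lemmas of Section~2 once more, this time bookkeeping the extra rigidifying data of the strong structure --- the base point $x^0$ and the fixed isomorphism $\rho:\pi_1(T,x^0)\to\pi$ --- and observing that the isomorphisms $\rho$ and $\theta$, which were allowed to be arbitrary in the notion of equivalent extensions, may now be pinned to the identity. Concretely, two strongly marked regular free actions $p_1,p_2$ of $G$ should be called strongly homotopy conjugate when there is a base-point-preserving homotopy equivalence $\varphi:T_1\to T_2$ satisfying (\ref{eq_action_equiv}) with $\theta=\mathrm{id}_G$ and compatible with the markings, i.e. $\rho_2\circ\varphi_\#=\rho_1$. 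The correspondence sends such a $p$ to $1\to\pi_1(T)\to\pi_1(T/p(G))\to G\to 1$ with $\pi_1(T)$ identified with $\pi$ via $\rho$; it is exactly the map of Theorem~\ref{thm_class_free_action}, now with the marking used to pin down both the kernel and the quotient on the nose.

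First I would check both well-definedness and injectivity by re-running the first two lemmas with $\theta=\mathrm{id}_G$. In Lemma~\ref{lm_correctness} the commutative ladder has vertical maps $\varphi_\#,\psi_\#,\mathrm{id}_G$, and the marking compatibility $\rho_2\circ\varphi_\#=\rho_1$ says that, after identifying both kernels with $\pi$, its left column is $\rho=\mathrm{id}_\pi$; hence strongly conjugate actions produce \emph{congruent}, not merely equivalent, extensions, so the assignment descends to congruence classes. Conversely, if two strongly marked actions have congruent extensions, congruence supplies an isomorphism $\xi:\pi_1(T_1/p_1(G))\to\pi_1(T_2/p_2(G))$ making the boundary square commute with $\theta=\mathrm{id}_G$, so Lemma~\ref{lm_injection} returns a homotopy equivalence $\varphi$ with $\theta=\mathrm{id}_G$, with $\varphi(x_1^0)=x_2^0$ by construction, and with $\xi\circ P_{1\#}=P_{2\#}\circ\varphi_\#$. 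Since congruence also gives $\xi\circ P_{1\#}=P_{2\#}$ on the kernel (under the identifications $\rho_i=\mathrm{id}_\pi$) and $P_{2\#}$ is injective, we get $\varphi_\#=\mathrm{id}_\pi$, i.e. $\rho_2\circ\varphi_\#=\rho_1$; thus $\varphi$ is a strong homotopy conjugacy and the correspondence is injective.

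For surjectivity I would feed a given extension $1\to\pi\to S\to G\to 1$ to the surjectiveness lemma, obtaining $T=\E S/\pi$ with its simplicial structure and the free $G$-action. The new ingredient is a canonical marking: since $\E S$ is contractible, the covering $\E S\to T$ identifies $\pi_1(T)$ with its deck group $\pi$ tautologically, and I take this identification as $\rho$; likewise $\E S\to\E S/S$ identifies $\pi_1(T/G)$ with $S$. Under these canonical identifications the induced sequence $1\to\pi_1(T)\to\pi_1(T/p(G))\to G\to 1$ becomes literally $1\to\pi\to S\to G\to 1$, so it is congruent to the given one, with $\rho=\mathrm{id}_\pi$ and $\theta=\mathrm{id}_G$.

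The delicate point is precisely this last congruence. It is immediate that the Milnor construction reproduces the extension up to isomorphism; congruence, however, demands equality on the nose, with the \emph{inner} column an identity as well as the outer ones. The hard part is therefore to verify that the canonical isomorphisms $\pi_1(T)\cong\pi$ and $\pi_1(T/G)\cong S$ coming from the two coverings are simultaneously compatible with the inclusion $\pi\hookrightarrow S$ and the projection $S\to G$. This is forced by viewing $\E S\to\E S/\pi\to\E S/S$ as a tower of regular coverings with deck groups $\pi$ and $S/\pi=G$: the kernel of the connecting homomorphism of the lower covering is the tautological $\pi$ and the connecting homomorphism itself is the tautological quotient $S\to G$, so the entire ladder consists of the prescribed identities. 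Everything else reduces to the routine diagram bookkeeping already carried out in Lemmas~\ref{lm_correctness} and~\ref{lm_injection}.
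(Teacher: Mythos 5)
Your proposal is correct and takes essentially the same route as the paper: the paper itself gives no separate argument for Theorem~\ref{thm_strong_class_free_action}, remarking only that it follows from versions of the well-definiteness, injectiveness and surjectiveness lemmas repeated ``almost word for word'' with the marking data carried along. Your re-run of Lemmas~\ref{lm_correctness} and~\ref{lm_injection} with $\theta=\mathrm{id}_G$ and $\rho_2\circ\varphi_\#=\rho_1$, plus the canonical identifications of deck groups in the Milnor tower $\E S\to\E S/\pi\to\E S/S$ for surjectivity, is exactly the intended elaboration (and in fact supplies details, such as the definition of strong conjugacy and the use of injectivity of $P_{2\#}$, that the paper leaves implicit).
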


A proof of this theorem is similar to the proof of the theorem \ref{thm_class_free_action} and based on lemmas almost word for word repeated the lemmas on well-definiteness, injectiveness and sujectiveness.

\begin{example}
    If the group $\pi$ is abelian then the action $p$ of $G$ on $T$ induces the action $\eta$ of $G$ on $\pi$ ($\eta:G\to \Aut\pi$). $G$-module structure on $\pi$ is defined by the map $\eta$. The set of all congruence classes of extensions of $G$ by $\pi$ is exactly the second cohomology $H^2(G,\pi)$ (the structure of $G$-module on $\pi$ is fixed). If $G$ is non-abelian then there exists only homomorphism $\eta:G\to \Out \pi$ of $G$ into the outer automorphism class group of the group $\pi$. In this case the set of all congruence classes of extensions of $G$ by $\pi$ is the second cohomology $H^2(G,C(\pi))$ ($C(\pi)$ is the center of $\pi$) or empty. This depends on an obstruction in the third cohomology $H^3(G,C(\pi))$.
\end{example}

The results of the classification theorems \ref{thm_class_free_action} and \ref{thm_strong_class_free_action} can be generalize to infinite discrete groups. However in this case the projection $T\to T/G$ may not be a covering. Since we essentially use the covering structure on $T\to T/G$, then one have to require essentially stronger conditions than we used.

\section{Non-free actions}

Let us remember the $S^\infty$ example from the first paragraph. We see that there are some difficulties with the notion of homotopy conjugation for non-free actions. So let us use the Borel construction.

\begin{defn}
    Let $p$ be an action (optionally non-free) of a finite group $G$ on an aspherical space $T$. The diagonal free action $p_f$ is induced on the space $T\times \E G$ where $\E G$ is a contractible space and $G$ acts regularly and freely on it. The following extension is called a subordinate extension to the action $p$:

    \begin{equation}
    \label{eq_slave_extension}
        1\to\pi_1(T)\to\pi_1(T\times\E G/G)\to G\to 1.
    \end{equation}
\end{defn}

Now we show that the extension (\ref{eq_slave_extension}) is well defined. Thus we have to prove that this extension is independent of the choice of the space $\E G$. Let $\E G$ and $\E G'$ be contractible spaces and let $G$ act regularly and freely on them. Since $\E G$ and $\E G'$ are contractible, then from theorem \ref{thm_strong_class_free_action} it follows that the actions $p_f$ and $p_f'$ are strongly homotopy conjugate. Consequently there exists an equivivariant homotopy equivalence $\psi:\E G\to \E G'$. By $\varphi$ denote the map $\varphi:T\times\E G\to T\times\E G'$, $\varphi(x,e) = (x,\psi(e))$. Obviously, $\varphi$ is an equivariant homotopy equivalence. So from lemma \ref{lm_correctness} it follows that the induced extensions are isomorphic. Moreover, if we fix the natural isomorphisms $\pi_1(T\times \E G) = \pi_1(T)$ and $\pi_1(T\times \E G') = \pi_1(T)$, then these extensions will be congruent.

\begin{lemma}
    If the action $p$ is free, then the subordinate extension (\ref{eq_slave_extension}) and the extension from the classification theorem \ref{thm_class_free_action} are congruent.
\end{lemma}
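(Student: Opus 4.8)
The plan is to produce an equivariant homotopy equivalence between the two total spaces and then quote the well-definiteness lemma. Since $\E G$ is contractible, the projection $\mathrm{pr}:T\times\E G\to T$ onto the first factor is a homotopy equivalence. Writing the diagonal action as $p_f(g)(x,e)=\bigl(p(g)x,\, q(g)e\bigr)$, where $q$ denotes the given regular free action of $G$ on $\E G$, one checks at once that $\mathrm{pr}\circ p_f(g)=p(g)\circ\mathrm{pr}$ for every $g\in G$. Thus $\mathrm{pr}$ satisfies equation (\ref{eq_action_equiv}) with the isomorphism $\theta=\mathrm{id}_G$.

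First I would note that because $p$ is free, the diagonal action $p_f$ is free as well, and both $T$ and $T\times\E G$ are path-connected; hence Lemma \ref{lm_correctness} is applicable. Applying it to the homotopy equivalence $\varphi=\mathrm{pr}$ and the isomorphism $\theta=\mathrm{id}_G$ immediately yields that the subordinate extension (\ref{eq_slave_extension}) and the extension attached to $p$ in theorem \ref{thm_class_free_action} are equivalent, the induced middle isomorphism $\xi$ being $\overline{\mathrm{pr}}_\#$ (where $\overline{\mathrm{pr}}$ is the map on quotients induced by $\mathrm{pr}$) and the right-hand vertical map being $\mathrm{id}_G$.

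It then remains to upgrade equivalence to congruence, that is, to check that the left-hand vertical isomorphism is the identity on $\pi_1(T)$. By the construction in the proof of Lemma \ref{lm_correctness}, this isomorphism is $\mathrm{pr}_\#:\pi_1(T\times\E G)\to\pi_1(T)$. Under the natural identification $\pi_1(T\times\E G)=\pi_1(T)\times\pi_1(\E G)=\pi_1(T)$ coming from $\pi_1(\E G)=1$, the homomorphism $\mathrm{pr}_\#$ becomes the identity. Fixing exactly this identification, in accordance with the paragraph preceding the lemma, gives $\rho=\mathrm{id}_{\pi_1(T)}$ together with $\theta=\mathrm{id}_G$, so the two extensions are congruent.

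I expect the only real (and minor) obstacle to be bookkeeping: ensuring that the middle isomorphism $\xi$ produced by Lemma \ref{lm_correctness} genuinely completes the congruence diagram with the identity on both outer terms, rather than being merely some abstract isomorphism of short exact sequences. This is settled precisely by the explicit identification of $\mathrm{pr}_\#$ with the identity above; everything else is a direct invocation of the already-established well-definiteness lemma.
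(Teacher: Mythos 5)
Your proof is correct and follows essentially the same route as the paper: the key step in both is that the projection $T\times\E G\to T$ is an equivariant homotopy equivalence (with $\theta=\mathrm{id}_G$). The only cosmetic difference is that the paper packages the congruence conclusion as ``the actions $p$ and $p_f$ are strongly homotopy conjugate,'' while you obtain mere equivalence from Lemma \ref{lm_correctness} and then upgrade to congruence by identifying $\mathrm{pr}_\#$ with the identity under $\pi_1(T\times\E G)=\pi_1(T)$ --- the same bookkeeping, done by hand.
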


\begin{proof}
    It is sufficient to prove that the actions $p$ and $p_f$ are strongly homotopy conjugate. The projection $T\times \E G\to T$ is the necessary equivariant homotopy equivalence.
\end{proof}

It is easily shown that

\begin{lemma}
    Let $p_1$ and $p_2$ be an actions (optionally non-free) of finite groups $G_1$ and $G_2$ on aspherical spaces $T_1$ and $T_2$. Suppose there exists a homotopy equivalence $\varphi:T_1:\to T_2$ and an isomorphism $\theta:G_1\to G_2$ such that the equation (\ref{eq_action_equiv}) holds. Then the subordinate extensions are equivalent.
\end{lemma}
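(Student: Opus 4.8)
The plan is to reduce the claim to the well-definiteness Lemma \ref{lm_correctness}. Write $\epsilon_i:G_i\to\Homeo\E G_i$ for the regular free action of $G_i$ on $\E G_i$, so that the diagonal action is $p_{i,f}(g)(x,e)=\bigl(p_i(g)x,\epsilon_i(g)e\bigr)$. By definition the subordinate extension of $p_i$ is exactly the short exact sequence induced by the covering $T_i\times\E G_i\to(T_i\times\E G_i)/G_i$ of the free action $p_{i,f}$. Hence it suffices to produce a homotopy equivalence $\Phi:T_1\times\E G_1\to T_2\times\E G_2$ which, together with the given isomorphism $\theta$, satisfies the equivariance equation (\ref{eq_action_equiv}) for $p_{1,f}$ and $p_{2,f}$: Lemma \ref{lm_correctness} will then immediately give that the two induced sequences, that is the two subordinate extensions, are equivalent.

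First I would transport the action $\epsilon_2$ back to a $G_1$-action on $\E G_2$ through $\theta$, setting $g\cdot e=\epsilon_2\bigl(\theta(g)\bigr)e$. Because $\theta$ is an isomorphism this is again a regular free action, and $\E G_2$ remains contractible. Exactly as in the argument proving that the subordinate extension is well defined (an application of Theorem \ref{thm_strong_class_free_action}), any two contractible spaces equipped with regular free $G_1$-actions are strongly homotopy conjugate; applied to $\E G_1$ and to $\E G_2$ with the twisted action, this furnishes a homotopy equivalence $\psi:\E G_1\to\E G_2$ satisfying $\psi\circ\epsilon_1(g)=\epsilon_2\bigl(\theta(g)\bigr)\circ\psi$ for all $g\in G_1$.

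Next I would set $\Phi=\varphi\times\psi$, i.e. $\Phi(x,e)=\bigl(\varphi(x),\psi(e)\bigr)$. As a product of homotopy equivalences $\Phi$ is again a homotopy equivalence, and since the diagonal actions operate coordinatewise a one-line computation combining the equivariance of $\varphi$ (hypothesis (\ref{eq_action_equiv})) with that of $\psi$ yields $\Phi\circ p_{1,f}(g)=p_{2,f}\bigl(\theta(g)\bigr)\circ\Phi$ for every $g\in G_1$. The total spaces $T_i\times\E G_i$ are path-connected (each $T_i$ is connected and each $\E G_i$ contractible) and the actions $p_{i,f}$ are free (their $\E G_i$-factor already is), so every hypothesis of Lemma \ref{lm_correctness} is met.

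Applying Lemma \ref{lm_correctness} to $p_{1,f}$, $p_{2,f}$, the homotopy equivalence $\Phi$ and the isomorphism $\theta$ then gives the equivalence of the induced short exact sequences, which are precisely the subordinate extensions (\ref{eq_slave_extension}). I expect the only delicate point to be the construction of $\psi$: one must check that the $\theta$-twisted $G_1$-action on $\E G_2$ is still regular and free, so that the uniqueness-of-$\E G$ argument applies word for word. Granting this, the remainder is the routine coordinatewise verification of equivariance and a direct appeal to Lemma \ref{lm_correctness}.
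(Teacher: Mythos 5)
Your proof is correct and takes essentially the same route as the paper: both arguments construct a $\theta$-equivariant homotopy equivalence $\psi$ between the contractible factors $\E G_1$ and $\E G_2$ (the paper obtains it directly from Lemma~\ref{lm_injection}, while you obtain it by twisting the $G_2$-action through $\theta$ and invoking the uniqueness-of-$\E G$ argument, which rests on that same lemma), then form the product map $\varphi\times\psi$ on $T_1\times\E G_1$ and conclude by applying Lemma~\ref{lm_correctness} to the diagonal actions. The difference is purely cosmetic, so nothing further is needed.
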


\begin{proof}
    From lemma \ref{lm_injection} it follows that the actions of $G_1$ on $\E G_1$ and $G_2$ on $\E G_2$ are homotopy conjugate. Consequently there exists a homotopy equivalence $\Phi:\E G_1\to \E G_2$ such that $\Phi(gx)=\theta(g)\Phi(x)$ for all $g\in G_1$ and $x\in T_1$.

    Consider the homotopy equivalence $\varphi':T_1\times \E G_1 \to T_2\times \E G_2$, $\varphi':(x,e)\mapsto (\varphi(x),\Phi(e))$. Using lemma \ref{lm_correctness}, we see that the extension induced by the diagonal actions of $G_1$ on $T_1\times \E G_1$ and $G_2$ on $T_2\times \E G_2$ are equivalent.
\end{proof}

So the subordinate extension (\ref{eq_slave_extension}) allow us to distinguish actions of a finite group on aspherical spaces.

\section{Calculation methods of subordinate extensions}

\begin{lemma}[about a fixed point]
\label{lm_semidirect_product}
    If an action $p$ of a finite group $G$ on an aspherical space $T$ has a fixed point, then the subordinate extension is decomposed, i.e., it is a semidirect product.
\end{lemma}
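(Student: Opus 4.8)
The plan is to produce a group-theoretic splitting of the subordinate extension directly, which is precisely what it means for the extension to be a semidirect product. Fix a point $x_0\in T$ with $p(g)x_0=x_0$ for all $g\in G$, and let $\E G$ be the contractible space carrying the regular free $G$-action used in the subordinate extension; since $\E G$ is contractible, $\E G/G$ is a $K(G,1)$ and $\pi_1(\E G/G)\cong G$, while $\pi_1(T\times\E G)=\pi_1(T)$. The diagonal action is $g\cdot(x,e)=(p(g)x,g\cdot e)$, so the existence of the fixed point makes the inclusion $j\colon\E G\to T\times\E G$, $e\mapsto(x_0,e)$, into a $G$-equivariant map: $j(g\cdot e)=(x_0,g\cdot e)=g\cdot j(e)$. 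It is this equivariant inclusion that the hypothesis buys us, and everything will be built on it.

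First I would pass to quotients. The map $j$ descends to $\bar\jmath\colon\E G/G\to(T\times\E G)/G$ and fits into a commuting square over the two coverings $\E G\to\E G/G$ and $T\times\E G\to(T\times\E G)/G$, each with deck group $G$. Writing $\mathrm{can}\colon\pi_1(\E G/G)\xrightarrow{\sim}G$ for the monodromy isomorphism of the first covering, I would set $s=\bar\jmath_\#\circ\mathrm{can}^{-1}\colon G\to\pi_1\bigl((T\times\E G)/G\bigr)$ as the candidate homomorphic section.

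Next I would verify that $s$ is genuinely a section, i.e. that the boundary map $\partial\colon\pi_1\bigl((T\times\E G)/G\bigr)\to G$ of the subordinate extension satisfies $\partial\circ s=\mathrm{id}_G$. This reduces to the naturality of the monodromy homomorphism under the morphism of $G$-coverings above: given a loop in $\E G/G$, lift it to a path in $\E G$ whose endpoint is the start translated by the deck element recording its monodromy; pushing this lifted path forward by the equivariant $j$ and invoking uniqueness of lifts (exactly the path-lifting argument used in lemma \ref{lm_injection}) shows the image loop in $(T\times\E G)/G$ has the same monodromy. Hence $\partial\circ\bar\jmath_\#=\mathrm{can}$, and therefore $\partial\circ s=\mathrm{can}\circ\mathrm{can}^{-1}=\mathrm{id}_G$.

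I expect the main obstacle to be precisely this last compatibility — making rigorous that an equivariant inclusion of total spaces intertwines the two monodromy-to-$G$ homomorphisms — since the remaining ingredients (equivariance of $j$, contractibility of $\E G$ pinning down $\pi_1$, and the descent to quotients) are formal. Once $s$ is exhibited as a homomorphism with $\partial\circ s=\mathrm{id}_G$, the extension $1\to\pi_1(T)\to\pi_1\bigl((T\times\E G)/G\bigr)\to G\to1$ splits and is thus a semidirect product, as claimed.
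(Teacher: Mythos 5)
Your proposal is correct and takes essentially the same route as the paper: the paper also uses the fixed point to form the equivariant embedding $e\mapsto(x_0,e)$, descends it to a section $S$ of the Borel fibration $P:(T\times\E G)/G\to\E G/G$ (your $\bar\jmath$), and concludes that $S_\#$ is a right inverse of $P_\#$. The only presentational difference is that the paper obtains the splitting formally from $P\circ S=\mathrm{id}$ together with the identification of the fibration's exact sequence with the subordinate extension, whereas you verify the same compatibility by hand via equivariant path lifting and naturality of the monodromy.
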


\begin{proof}
    There is a Cartan-Borel fibration $P:(T\times\E G)/G\to \E G/G$ associated with the action $p$ where $P:G(x,e)\mapsto Ge$ for $x\in T$ and $e\in\E G$. The map $P$ is induced by the equivariant projection $T\times\E G\to \E G$ and since $G$ is finite, then $P$ is locally trivial fibration. The fibre of this fibration is $T$.

    It is clear that the exact sequence of this fibration

    \begin{equation}
    \label{eq_exact_seq_Cortan_Borel}
        1 \to \pi_1(T) \to \pi_1\bigl((T\times\E G)/G\bigr) \xrightarrow{P_\#} \pi_1(\E G/G) \to 1
    \end{equation}

    is equivalent to the subordinate extension (\ref{eq_slave_extension}).

    Let $x_0\in T$ be a fixed point of the action $p$. By definition, put $S:\E G/G\to(T\times\E G)/G$. $S$ is a section of $P$ induced by the equivariant embedding $\E G\to T\times\E G$, $e\mapsto (x_0,e)$. Consider the maps $S_\#$ and $P_\#$ of the fundamental groups ($S_\#$ and $P_\#$ are induced by $S$ and $P$). Since $P\circ S=id$, then the homomorphism $S_\#$ is a right inverse for $P_\#$. Q.E.D.
\end{proof}

Note that the structure of the semidirect product on $\pi_1\bigl((T\times\E G)/G\bigr)$ is defined by the induced action of $G$ on $\pi_1(T,x_0)$. This action is well defined because the point $x_0$ is fixed.

Also, relations between extensions subordinated to a group and its subgroup can help in some calculations.

\begin{lemma}
\label{lm_subgroups}
    Let a finite group $G$ act on an aspherical space $T$ and let $H$ be its subgroup. Consider the induced action of $H$ on $T$. Then the following diagram is commutative and has exact rows and columns\footnote{If $H$ is not a normal subgroup of $G$, then the third row is considered as an exact sequence of pointed sets.}:

        \begin{center}\begin{tikzpicture}
        \matrix (m) [matrix of math nodes, row sep=1.5em, column sep=1.5em, text height=2.5ex, text depth=0.25ex]
        {
                & 1     & 1                 & 1     &   \\
            1   & \pi_1(T)  & \pi_1\bigl((T\times\E H)/H\bigr)  & H     & 1 \\
            1   & \pi_1(T)  & \pi_1\bigl((T\times\E G)/G\bigr)  & G     & 1 \\
                & 1     & \pi_1((T\times\E G)/G)\Big/
                            \pi_1((T\times\E H)/H)      & G/H       & 1 \\
                &       & 1                 & 1     &   \\
        };
        \path[->,font=\scriptsize]
            (m-2-1) edge                (m-2-2)
            (m-2-2) edge node[auto] {$P^H_\#$}  (m-2-3)
            (m-2-3) edge node[auto] {$\partial$}    (m-2-4)
            (m-2-4) edge                (m-2-5)

            (m-3-1) edge                (m-3-2)
            (m-3-2) edge node[auto] {$P^G_\#$}  (m-3-3)
            (m-3-3) edge node[auto] {$\partial$}    (m-3-4)
            (m-3-4) edge                (m-3-5)

            (m-4-2) edge                (m-4-3)
            (m-4-3) edge node[auto] {$j_3$}     (m-4-4)
            (m-4-4) edge                (m-4-5)

            (m-1-2) edge                (m-2-2)
            (m-2-2) edge node[auto] {$id$}      (m-3-2)
            (m-3-2) edge                (m-4-2)

            (m-1-3) edge                (m-2-3)
            (m-2-3) edge node[auto] {$P_\#$}    (m-3-3)
            (m-3-3) edge node[auto] {$j_1$}     (m-4-3)
            (m-4-3) edge                (m-5-3)

            (m-1-4) edge                (m-2-4)
            (m-2-4) edge                (m-3-4)
            (m-3-4) edge node[auto] {$j_2$}     (m-4-4)
            (m-4-4) edge                (m-5-4);
    \end{tikzpicture}\end{center}

    where $P^G:T\to T/G$, $P^H:T\to T/H$, and $P:T/H\to T/G$ are the natural projections.
\end{lemma}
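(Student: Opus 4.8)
The plan is to reduce the whole statement to covering-space theory by computing all three Borel constructions from a single contractible space. Since $\E G$ is contractible and carries a free regular $G$-action, its restriction to the subgroup $H$ is again a contractible space with a free regular $H$-action (the $G$-cell structure on $\E G$ restricts to an $H$-cell structure, so $\E G/H$ is a cell complex). By the well-definedness of the subordinate extension established just before this lemma, I may therefore compute the $H$-subordinate extension using $\E H:=\E G$. With this identification both Borel constructions are quotients of the single space $X:=T\times\E G$, and the inclusion $H\subseteq G$ produces a tower of coverings
\[
    X \longrightarrow X/H \longrightarrow X/G,
\]
all genuine coverings because the diagonal $G$-action on $X$ is free and regular. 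Write $\pi=\pi_1(X)=\pi_1(T)$ (using contractibility of $\E G$), $\Lambda=\pi_1(X/H)$ and $\Gamma=\pi_1(X/G)$.

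First I would record the subgroup chain produced by this tower. Since $X\to X/G$ and $X\to X/H$ are regular coverings with deck groups $G$ and $H$, covering theory gives a common normal subgroup $\pi$ sitting inside $\Lambda\subseteq\Gamma$ with $\Gamma/\pi\cong G$ and $\Lambda/\pi\cong H$, where $\Lambda$ is exactly the index-$[G:H]$ subgroup of $\Gamma$ corresponding to the intermediate covering $X/H\to X/G$. These two isomorphisms are precisely the middle and lower rows of the diagram, namely the subordinate extensions of the $H$- and $G$-actions, whose exactness is already known; the identity map $\pi\to\pi$ gives the exact left column; and the right column $1\to H\to G\to G/H\to 1$ is the defining sequence of the subgroup pair. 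Commutativity of every square is then automatic, since each arrow is induced either by the covering maps $X\to X/H\to X/G$ (giving $P^H_\#$, $P^G_\#$, $P_\#$) or by the quotient homomorphisms modulo $\pi$ (giving the boundary maps $\partial$ and the maps $j_1,j_2$), and these are compatible by functoriality of $\pi_1$.

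It remains to supply the third column and the bottom row, i.e. to identify the cokernel of $\Lambda\hookrightarrow\Gamma$ with $G/H$. Injectivity of $P_\#$ is immediate from the covering $X/H\to X/G$, and since $\pi$ is normal in $\Gamma$ and contained in $\Lambda$, passing to the quotient by $\pi$ yields a natural bijection of coset spaces
\[
    \Gamma/\Lambda \;\xrightarrow{\ \sim\ }\; (\Gamma/\pi)\big/(\Lambda/\pi)\;=\;G/H,
\]
which is the map $j_3$. This makes the bottom row $1\to\Gamma/\Lambda\xrightarrow{j_3}G/H\to 1$ exact and the third column $1\to\Lambda\xrightarrow{P_\#}\Gamma\xrightarrow{j_1}\Gamma/\Lambda\to 1$ exact; the remaining exactness of the diagram then follows by the nine-lemma, or more directly since every nontrivial identification has just been made by hand.

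The \emph{main obstacle} is the case in which $H$ is not normal in $G$. Then $\Lambda$ need not be normal in $\Gamma$, so $\Gamma/\Lambda$ and $G/H$ are merely pointed sets rather than groups, and exactness of the third row and column must be read in that weaker sense, as the footnote demands. The point that survives is that $\pi$ remains a genuine normal subgroup of $\Gamma$ lying inside $\Lambda$; hence the coset bijection $\Gamma/\Lambda\cong(\Gamma/\pi)/(\Lambda/\pi)=G/H$ of the previous paragraph is still valid as a bijection of pointed sets, which is exactly what is needed for the diagram to have exact rows and columns in the pointed-set sense.
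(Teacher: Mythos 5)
Your proof is correct, and it rests on the same geometric reduction as the paper's: both arguments collapse the statement onto a tower of coverings for a free action. The paper does this by declaring ``without loss of generality $p$ is free'' and working with $T \to T/H \to T/G$, leaving the Borel bookkeeping implicit; you instead take $\E H := \E G$ and invoke well-definedness of the subordinate extension, which is a more explicit justification of the same reduction (and matches the paper's own remark, stated right after this lemma, that a regular free $G$-action restricts to a regular free $H$-action). Where the two proofs differ is in how the nontrivial exactness statements are verified. The paper proves injectivity of $P_\#$ by a diagram chase (from injectivity of $P^G_\# = P_\# \circ P^H_\#$ together with $\ker P_\# \subseteq \im P^H_\#$), and proves exactness of the bottom row by defining $j_3$ explicitly and chasing well-definedness, surjectivity and injectivity by hand. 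You replace these chases by standard theory: a covering map induces a monomorphism on $\pi_1$, the intermediate covering corresponds to the subgroup $\Lambda = \partial^{-1}(H) \subseteq \Gamma$, and the bottom row becomes the coset form of the third isomorphism theorem, $\Gamma/\Lambda \cong (\Gamma/\pi)\big/(\Lambda/\pi) = G/H$, which survives as a bijection of pointed sets when $H$ is not normal. Your route is shorter and names the conceptual content (the Galois correspondence for coverings plus the coset correspondence), while the paper's chases are self-contained and amount to an inline proof of exactly those standard facts. One small caution: the nine-lemma is not literally applicable in this setting (the groups are non-abelian, and in the non-normal case the outer row and column are only pointed sets), but this costs you nothing, since, as you yourself note, every exactness statement in the diagram has already been checked directly.
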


\begin{proof}
    Without loss of generality we can assume that the action $p$ is free (the Borel construction allows us to instantly rewrite the following proof for the non-free case).

    Consider the following commutative diagram

    \begin{center}\begin{tikzpicture}[description/.style={fill=white,inner sep=2pt}]
        \matrix (m) [matrix of math nodes, row sep=3em, column sep=2.5em, text height=1.5ex, text depth=0.25ex]
        {
            T   &   & T/H   \\
                & T/G   &   \\
        };
        \path[->,font=\scriptsize]
            (m-1-1) edge node[auto] {$P^G$} (m-2-2)
                edge node[auto] {$P^H$} (m-1-3)
            (m-2-2) edge node[auto] {$P$}   (m-1-3);
    \end{tikzpicture}\end{center}

    Note that $P^G:T\to T/G$ and $P^H:T\to T/H$ are regular coverings with finite fibres. $P:T/H\to T/G$ is a covering and it is regular iff $H$ is a normal subgroup of $G$.

    Since the previous diagram is commutative, then the diagram from the conditions is commutative too. The first and the second rows are exact as they are parts of the long exact sequences of the coverings $P^H:T\to T/H$ and $P^G:T\to T/G$. The first and the third columns are obviously exact.

    Let us show that the homomorphism $P_\#$ is a monomorphism: the map $H\to G$ is a monomorphism. Consequently, $\ker P_\# \subseteq \ker\partial=\im P^H_\#$. Therefore, $\ker P_\#\subseteq P^H_\#\bigl( \ker(P_\#\circ P^H_\#) \bigr)$. On the over hand, $P_\#\circ P^H_\# = P^G_\#$ is a monomorphism. Thus, $\ker P_\# \subseteq P^H_\#\bigl(\ker(P^G_\#)\bigr) = \{e\}$. That is the second column is exact.

    Finally, we shall prove that the last row is also exact. Let $j_1:\pi_1(T/G) \to \pi_1(T/G)\big/\pi_1(T/H)$, $j_2:G \to G/H$, and $j_3:\pi_1(T/G)\big/\pi_1(T/H) \to G/H$ be the homomorphisms from the initial diagram. First we have to describe $j_3$: suppose $x\in \pi_1(T/G)\big/\pi_1(T/H)$, then there exists $y\in \pi_1(T/G)$ such that $j_1(y)=x$ as $j_1$ is an epimorphism. Then we have $j_3(x) = (j_2\circ\partial) (y)$. We see that $j_3(x)$ is independent of the choice of the element $y$ because all elements from the coset $y P_\#\bigl( \pi_1(T/H) \bigr)$ map to the same element:

    $$
        (j_2\circ\partial)\Bigl( y P_\#\bigl( \pi_1(T/H) \bigr) \Bigr) =
        (j_2\circ\partial) (y) \cdot (j_2\circ\partial)\Bigl( P_\#\bigl( \pi_1(T/H) \bigr) \Bigr) =
        (j_2\circ\partial) (y).
    $$

    \begin{enumerate}
        \item Surjectiveness of $j_3$: for any $x\in G/H$ there exists $y\in\pi_1(T/G)$ such that $(j_2\circ\partial)(y)=x$ ($j_2$ and $\partial$ are epimorphisms). Hence $j_1(y)\in \pi_1(T/G)\big/\pi_1(T/H)$ is a preimage of $x$.
        \item Injectiveness of $j_3$: suppose $x\in\pi_1(T/G)\big/\pi_1(T/H)$ and $j_3(x)=1$. Then there exists $y\in \pi_1(T/G)$ such that $j_1(y)=x$. Obviously, $\partial y\in \ker j_2=H$. Hence there exists an element $z\in\pi_1(T/H)$ such that $\partial y=\partial z=\partial P_\#(z)$. Since $\partial\bigl( y P_\#(z^{-1}) \bigr) = 1\in G$; then there exists an element $t\in \pi_1(T)$ such that $P^G_\#(t) = y P_\#(z^{-1})$. Thus we get $P_\#\bigl( P^H_\#(t)z \bigr) = P^G_\#(t) P_\#(z) = y$, i.e., $y\in\im P_\#$. Consequently, $x=j_1(y)=1$, Q.E.D.
    \end{enumerate}
\end{proof}

\begin{corollary}
    $\pi_1\bigl((T\times\E H)/H\bigr) = \partial^{-1}(H)$ is a subgroup in $\pi_1\bigl((T\times\E G)/G\bigr)$ and its index is the same as the index of $H$ in $G$. Moreover, $\partial^{-1}(H)$ is a normal subgroup of $\pi_1\bigl((T\times\E G)/G\bigr)$ iff $H$ is a normal subgroup of $G$.
\end{corollary}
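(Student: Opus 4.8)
The plan is to deduce everything from the exact commutative diagram of Lemma~\ref{lm_subgroups} by reading it as a collection of purely group-theoretic statements about the boundary epimorphism $\partial:\pi_1\bigl((T\times\E G)/G\bigr)\to G$. The central task is to identify the image of the monomorphism $P_\#$ with the preimage $\partial^{-1}(H)$. For one inclusion, commutativity of the middle square shows that $\partial\circ P_\#$ equals the composite of the boundary map $\pi_1\bigl((T\times\E H)/H\bigr)\to H$ with the inclusion $H\hookrightarrow G$; hence $\partial$ carries $\im P_\#$ into $H$, so $\im P_\#\subseteq\partial^{-1}(H)$. For the reverse inclusion, given $g$ with $\partial(g)\in H$, I would use surjectivity of the boundary map $\pi_1\bigl((T\times\E H)/H\bigr)\to H$ to find $h$ in the smaller group with $\partial(P_\#(h))=\partial(g)$; then $g\,P_\#(h)^{-1}\in\ker\partial=\im\bigl(\pi_1(T)\to\pi_1((T\times\E G)/G)\bigr)$, and since the left vertical map of the diagram is the identity this kernel already lies in $\im P_\#$, forcing $g\in\im P_\#$. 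Because $P_\#$ is injective, this yields the asserted identification $\pi_1\bigl((T\times\E H)/H\bigr)=\partial^{-1}(H)$.

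Once this identification is in hand, the index statement is immediate: since $\partial$ is surjective it induces a bijection between the coset space $\pi_1\bigl((T\times\E G)/G\bigr)/\partial^{-1}(H)$ and $G/H$, so the two indices agree. This is exactly the content of the exactness of the bottom row of the diagram in Lemma~\ref{lm_subgroups}, where $j_3$ is precisely the induced bijection of coset spaces.

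For the normality claim I would invoke the standard fact that preimages of normal subgroups under a surjective homomorphism are normal, together with its converse. Explicitly, if $H\trianglelefteq G$ then for any $\gamma$ and any $x\in\partial^{-1}(H)$ one has $\partial(\gamma x\gamma^{-1})=\partial(\gamma)\partial(x)\partial(\gamma)^{-1}\in H$, so $\partial^{-1}(H)$ is normal; conversely, if $\partial^{-1}(H)$ is normal, then using surjectivity of $\partial$ to lift an arbitrary $g\in G$ and an arbitrary $h\in H$ one obtains $ghg^{-1}=\partial(\gamma x\gamma^{-1})\in H$, so $H\trianglelefteq G$.

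I expect the only genuinely non-formal step to be the identification $\im P_\#=\partial^{-1}(H)$; the remaining index and normality claims are elementary group theory for a surjection onto $G$. The one point requiring care in the identification is that $\pi_1(T)$ must be recognised as a subgroup of $\im P_\#$, which is exactly what is guaranteed by the left vertical arrow being the identity in the diagram of Lemma~\ref{lm_subgroups}.
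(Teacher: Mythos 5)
Your proposal is correct and follows essentially the same route as the paper: the corollary is stated there as an immediate consequence of Lemma~\ref{lm_subgroups}, and your key step (the reverse inclusion $\partial^{-1}(H)\subseteq\im P_\#$, obtained by lifting $\partial(g)$ through the boundary map of the $H$-row and absorbing the difference into $\ker\partial=\im P^G_\#\subseteq\im P_\#$) is exactly the argument the paper uses inside the lemma to prove injectivity of $j_3$. The index and normality claims are then the same elementary coset-bijection and preimage-of-normal-subgroup arguments implicit in the paper's exact bottom row.
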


\begin{remark}
    If the action $p$ of $G$ on $T$ is free and regular, then the induced action of the subgroup $H\subseteq G$ is also regular (the space $T/p(H)$ is a cell complex because the natural projection $T/p(H)\to T/p(G)$ is a covering and the space $T/p(G)$ is a cell complex).
\end{remark}

The two previous lemmas are very useful because sometimes we can avoid explicit awkward calculations of the group $\pi_1\bigl((T\times\E G)/G\bigr)$.

\begin{example}
    Since any connected graph is an aspherical space; then any action of a finite group on it induces a subordinate extension.

    Let the dihedral group $G=D_{2n}$, $n\geq 3$ act on a circle $T=S^1$ (in this case the circle $S^1$ is an n-gon and the group $D_{2n}$ acts on it by vertex permutation). The normal subgroup $\mathbb Z_n\subseteq D_{2n}$ acts freely and regularly on it and it is clear that $S^1/\mathbb Z_n=S^1$. Hence from lemma \ref{lm_subgroups} it follows that the following diagram is commutative and its columns and rows are exact:

    $$
        \begin{array}{ccccccccc}
            &&1&&1&&1&&\\
            &&\downarrow&&\downarrow&&\downarrow&&\\
            1&\to&\mathbb Z&\xrightarrow{\times n}&\mathbb Z&\to&\mathbb Z_n&\to&1\\
            &&\downarrow&&\downarrow&&\downarrow&&\\
            1&\to&\mathbb Z&\to&A&\to&D_{2n}&\to&1\\
            &&\downarrow&&\downarrow&&\downarrow&&\\
            &&1&\to&\mathbb Z_2&\to&\mathbb Z_2&\to&1\\
            &&&&\downarrow&&\downarrow&&\\
            &&&&1&&1&&\\
        \end{array}
    $$

    Thus the subgroup $A$ is an extension of $\mathbb Z_2$ by $\mathbb Z$. Clearly, the only one variant is possible: $A=\mathbb Z\rtimes \mathbb Z_2$. So the extension subordinated to the natural action of $D_{2n}$ on $S^1$ is

    $$
        1 \to \mathbb Z \xrightarrow{\times n} \mathbb Z \rtimes_{(-1)} \mathbb Z_2 \to D_{2n} \to 1.
    $$

\end{example}

\section{Spectral sequences}

For any action $p$ of a finite group $G$ on a space $T$ there exists a Cartan-Serre spectral sequence for the diagonal action on $T\times\E G$ (see \cite{Bredon_Puchki}):

\begin{equation}
\label{eq_spectral_Cortan_Serre}
    \begin{array}{lllll}
        E^2_{p,q} & = & H_p\bigl(G,H_q(T)\bigr) & \Rightarrow & H_{p+q} \bigl((T\times\E G)/G\bigr);\\
        E_2^{p,q} & = & H^p\bigl(G,H^q(T)\bigr) & \Rightarrow & H^{p+q} \bigl((T\times\E G)/G\bigr).\\
    \end{array}
\end{equation}

Here the abelian groups $H_*(T)$ and $H^*(T)$ are $G$-modules induced by the action $p$.

Let $T$ and $(T\times\E G)/G$ be cell complexes. So if the space $T$ is aspherical, then $H^*(T)$ and $H^*\bigl((T\times\E G)/G\bigr)$ are a group cohomology and the spectral sequence (\ref{eq_spectral_Cortan_Serre}) is the same as the spectral sequence of Hochschild-Mostov for the extension (\ref{eq_slave_extension}) subordinated by $p$:

$$
    \begin{array}{lllll}
        E^2_{p,q} & = & H_p\Bigl(G,H_q\bigl(\pi_1(T)\bigr)\Bigr) & \Rightarrow &
            H_{p+q} \Bigl(\pi_1\bigl((T\times\E G)/G\bigr)\Bigr);\\
        E_2^{p,q} & = & H^p\Bigl(G,H^q\bigl(\pi_1(T)\bigr)\Bigr) & \Rightarrow &
            H^{p+q} \Bigl(\pi_1\bigl((T\times\E G)/G\bigr)\Bigr).\\
    \end{array}
$$

Thus there is an information about relations between cohomology of the group $G$ and the space $T$ in the subordinated extension (\ref{eq_slave_extension}).

\section{Group lattices}
Theory of classifying spaces for small categories is well-known. It was developed in the last century by Segal and Quillen (see \cite{Quillen_CS,Segal_CS}). Main ideas of these works previously appeared in Grothedick's paper (see \cite{Grothendieck}). Some good results can be obtained on this field.

Let $P$ be a poset. Consider $P$ as a small category in natural sense. By $\B P$ denote the classifying space of the small category $P$. The space $\B P$ is a simlplicial complex (with weak topology). Vertexes of $\B P$ are points in $P$, segments are inequalities $a<b$, triangles are inequalities $a<b<c$, etc.

Note that (co)homology $H_*(P)$ ($H^*(P)$), Eulerian haracteristic, etc. of the poset $P$ is exactly (co)homology $H_*(\B P)$ ($H^*(\B P)$), Eulerian haracteristic, etc. of the classifying space $\B P$.

Let a finite group $G$ act on $P$ with respect to the order on $P$. Then this action naturally induce an action on the space $\B P$.

For any finite group $G$ consider its lattices:

\begin{defn}
    Let $\Co G$ be the coset poset of $G$ (without $\hat 0 = \emptyset$ and $\hat 1=G$) with respect to the set-theoretic inclusion.
\end{defn}

The poset of all cosets between $g_1H_1$ and $g_2H_2$ is called the segment between $g_1H_1$ and $g_2H_2$ and is denoted by $[g_1H_1,g_2H_2]$:

$$
    [g_1 H_1,g_2 H_2] = \{gH\in\Co G: g_1 H_1 \subsetneq gH \subsetneq g_2 H_2\}.
$$

Without loss of generality we can consider only segments of the form $[H_1,H_2]$ where $H_i$ is a subgroup of $G$ or $\emptyset$ (since any segment $[g_1H_1,g_2H_2]$ is isomorphic to the segment $[H_1,H_2]$).

\begin{defn}
    The segment $[e,G]$ is called The subgroup lattice and is denoted by $\Sub G$.
\end{defn}

\begin{remark}
    We exclude the maximal and the minimal elements from the lattices $\Co G$ and $\Sub G$ because in the converse case the classifying spaces will be contractible and this is not informative.
\end{remark}

Thus the group $G$ acts on $\Co G$ by conjugation and left shift with respect to the order. Also the action by conjugation induce an action on a segment $[N_1,N_2]$ including $\Sub G$ where $N_1$ and $N_2$ are normal subgroups of $G$.

Now consider the lattice $\Co G$ (or $\Sub G$) as a small category. An action of $G$ on it gives us an action on the classifying space $\B\Co G$ (or $\B\Sub G$). So the proposed method of spectral sequences allows us to reveal a relation between the cohomology of the group $G$ and the cohomology of the lattices $\Co G$ and $\Sub G$.

\begin{thm}
    Let $L$ be one of the lattices $\Co G$ or $\Sub G$ and let an action of $G$ on $L$ (by conjugation or shift) be fixed. If the classifying space $\B L$ is aspherical, then there exists a (co)homological spectral sequence convergent to the (co)homology of the group $S$:

    $$
        \begin{array}{lllll}
            E^2_{p,q} & = & H_p\bigl(G,H_q(L)\bigr) & \Rightarrow & H_{p+q}(S),\\
            E_2^{p,q} & = & H^p\bigl(G,H^q(L)\bigr) & \Rightarrow & H^{p+q}(S),\\
        \end{array}
    $$

    where $1 \to \pi_1(\B L) \to S \to G \to 1$ is the extension subordinated to the action of $G$ on $\B L$.
\end{thm}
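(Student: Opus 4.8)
The plan is to read this statement off the spectral-sequence machinery of Section~5 applied to the single space $T=\B L$; the work then consists in checking the hypotheses of that machinery and identifying the two ends of the sequence. First I would fix the action: since $G$ acts on the poset $L$ preserving its order, it acts simplicially on the classifying space $\B L$, which is a simplicial complex with the weak topology, so the Borel construction applies verbatim. Taking $\E G$ to be the Milnor join of Section~2, on which $G$ acts freely, regularly and simplicially, the diagonal $G$-action on $\B L\times\E G$ is free and regular, and the associated subordinate extension~(\ref{eq_slave_extension}) is exactly the sequence $1\to\pi_1(\B L)\to S\to G\to 1$ named in the statement, with $S=\pi_1\bigl((\B L\times\E G)/G\bigr)$.

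Next I would invoke the Cartan--Serre spectral sequence~(\ref{eq_spectral_Cortan_Serre}) for the action of $G$ on $T=\B L$, giving
$$E^2_{p,q}=H_p\bigl(G,H_q(\B L)\bigr)\Rightarrow H_{p+q}\bigl((\B L\times\E G)/G\bigr)$$
and dually in cohomology. To rewrite the $E^2$-page in the form stated I would use the identification $H_*(\B L)=H_*(L)$ and $H^*(\B L)=H^*(L)$ recorded at the start of Section~6 --- the (co)homology of a poset is by definition that of its classifying space --- together with the observation that the $G$-module structure the action induces on $H_q(\B L)$ is precisely the one it induces on $H_q(L)$.

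It remains to identify the abutment with the (co)homology of $S$, and this is where the asphericity hypothesis is used. Since $\B L$ is aspherical and $\E G$ is contractible, the product $\B L\times\E G$ is aspherical; the free regular $G$-action then exhibits $\B L\times\E G$ as the universal cover of the cell complex $(\B L\times\E G)/G$, so the latter is a $K(S,1)$. Hence $H_{p+q}\bigl((\B L\times\E G)/G\bigr)=H_{p+q}(S)$ and likewise in cohomology, and the sequence coincides with the Hochschild--Mostov spectral sequence of the subordinate extension, exactly as remarked in Section~5. This yields the asserted convergence.

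The only genuine subtlety, and the step I would treat most carefully, is this last identification: one must insist that $\B L$ itself be aspherical (not merely that $L$ be some abstract poset) in order to conclude that the Borel quotient is again a $K(\pi,1)$ and hence that its singular (co)homology computes the group (co)homology of $S$. Everything else is a faithful transcription of the general construction, so once asphericity of $\B L$ is in hand both spectral sequences follow at once.
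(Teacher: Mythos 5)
Your proposal is correct and follows essentially the same route as the paper, which states this theorem without a separate proof precisely because it is the specialization of the earlier machinery --- Borel construction, the Cartan--Serre spectral sequence (\ref{eq_spectral_Cortan_Serre}), and the remark that for aspherical $T$ its abutment is the group (co)homology of the subordinate extension --- to the case $T=\B L$, together with the identification $H_*(L)=H_*(\B L)$. One small correction to your final step: $\B L\times\E G$ is a covering space of $(\B L\times\E G)/G$, not its universal cover (its fundamental group is $\pi_1(\B L)$, which need not be trivial); the quotient is a $K(S,1)$ because it is covered by an aspherical space, so its higher homotopy groups vanish, while its fundamental group is $S$ by the subordinate extension (\ref{eq_slave_extension}).
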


\begin{remark}
    If the lattice $L$ is a wedge product of circles ($\pi_1(L)$ is a free non-abelian group) then $G$ satisfies the conditions of the previous theorem. Subgroup lattices of many minimal simple groups are homotopy equivalent to wedges of circles. This fact was proved by Shareshian (see \cite{Shareshian}). In this case the constructed spectral sequences are two-row.
\end{remark}

\begin{example}
    Let $G=\mathbb Z_{pq}$ where $p$ and $q$ are simple numbers and let $\Gamma_{pq}=\B\Co\mathbb Z_{pq}$ be the graph constructed by the lattice $\Co\mathbb Z_{pq}$. The shift action of $G$ is considered. The following extension is subordinated to this action:

    $$
        1 \to \pi_1(\Gamma_{pq}) \to S_{pq} \to \mathbb Z_{pq} \to 1.
    $$

    There are $pq+p+q$ vertexes and $2pq$ edges in $\Gamma_{pq}$. Hence $\Gamma_{pq}$ is homotopy equivalent to the wedge $\bigvee^{(p-1)(q-1)}S^1$ of $(p-1)(q-1)$ circles. Consequently, $H_0(\Gamma_{pq})=\mathbb Z$, $H_1(\Gamma_{pq})=\mathbb Z^{(p-1)(q-1)}$, $H_p(\Gamma_{pq})=0$ for all $p\geq 2$, and $\pi_1(\Gamma{pq}) = F^{(p-1)(q-1)}$ is a non-abelian free group with $(p-1)(q-1)$ generators.
    Let us calculate the cohomology of the group $S_{pq}$ by spectral sequence of Hochschild-Mostov: $H_k(\mathbb Z_{pq},H_0(\Gamma_{pq})) = H_k(\mathbb Z_{pq})$ as $\mathbb Z_{pq}$ acts trivially on $H_0(\Gamma_{pq})=\mathbb Z$. Also it is easy to construct explicit generators of the group $H_1(\Gamma_{pq})$ in the graph $\Gamma_{pq}$, calculate the explicit action matrix of a generator of the group $\mathbb Z_{pq}$ on $H_1(\mathbb Z_{pq}$, write out a necessary resolvent of $\mathbb Z_{pq}$ for the module $H_1(\Gamma_{pq})$ by this matrix and get $H_k(\mathbb Z_{pq},H_1(\Gamma_{pq}))=0$ for all $k$ \footnote{The exact proof of this fact is removed from the paper texts because the used ideas are very simple and precise calculations are very cumbersome.}.
    Thus we get $H_k(S_{pq})=H_k(\mathbb Z_{pq}$ by spectral sequence.
\end{example}

The author thanks M. Zelikin and I. Zhdanovkiy for many valuable discussions.

\end{document}